
\documentclass[12pt,reqno]{amsart}%
\usepackage{amsfonts}
\usepackage{geometry}
\usepackage{amsmath}
\usepackage{amssymb}
\usepackage{amscd}
\usepackage{xspace}
\usepackage{fancyhdr}
\usepackage{graphicx}
\usepackage[pagebackref]{hyperref}
\setcounter{MaxMatrixCols}{30}
\providecommand{\U}[1]{\protect\rule{.1in}{.1in}}
\hypersetup{colorlinks,breaklinks,linkcolor={red},citecolor={blue},urlcolor={green}}
\geometry{
a4paper,
total={8.5in,11.5in},
left=1in,
right=1in,
top=1in,
bottom=1in,
}
\newtheorem{theorem}{Theorem}[section]
\theoremstyle{plain}

\newtheorem{lemma}{Lemma}[section]

\newtheorem{remark}{Remark}[section]

\numberwithin{equation}{section}
\begin{document}
\title{Characterizations of anisotropic high order Sobolev spaces}
\author[Nguyen Lam, Ali Maalaoui, Andrea Pinamonti]{Nguyen Lam$^{1}$, Ali Maalaoui$^{2}$, Andrea Pinamonti$^{3}$}
\date{\today}
\addtocounter{footnote}{1}
\footnotetext{University of British Columbia, Department of Mathematics
and The Pacific Institute for the Mathematical Sciences
Vancouver, BC V6T1Z4, Canada; E-mail address: {\tt{ nlam@math.ubc.ca}}}
\addtocounter{footnote}{1}
\footnotetext{Department of mathematics and natural sciences, American University of Ras Al Khaimah, PO Box 10021, Ras Al Khaimah, UAE. E-mail address:
{\tt{ali.maalaoui@aurak.ac.ae}}}
\addtocounter{footnote}{1}
\footnotetext{Dipartimento di Matematica, Universita di Trento, Via Sommarive 14, 38123 Povo, Trento, Italy. E-mail address:
{\tt{andrea.pinamonti@unitn.it}}}
\begin{abstract}
We establish two types of characterizations for high order anisotropic
Sobolev spaces. In particular, we prove high order anisotropic versions of Bourgain-Brezis-Mironescu's formula and 
Nguyen's formula.

\end{abstract}
\subjclass[2010]{ }
\keywords{}
\maketitle

\section{Introduction}
The celebrated Bourgain-Brezis-Mironescu formula, appeared for the first time
in \cite{bourg,bourg2}, and provided a new characterization for functions in the Sobolev space $W^{1,p}(\mathbb{R}^N)$, with $1<p<\infty$.
More precisely, they proved\\

\textbf{Theorem A. (Bourgain, Brezis and Mironescu, \cite{bourg}).}
\textit{Let }$g\in L^{p}\left(
\mathbb{R}
^{N}\right)  ,~1<p<\infty.$\textit{ Then }$g\in W^{1,p}\left(
\mathbb{R}
^{N}\right)  $\textit{ iff}%
\[
\underset{%
\mathbb{R}
^{N}}{%
{\displaystyle\int}
}\underset{%
\mathbb{R}
^{N}}{%
{\displaystyle\int}
}\frac{\left\vert g(x)-g(y)\right\vert ^{p}}{\left\vert x-y\right\vert ^{p}%
}\rho_{n}\left(  \left\vert x-y\right\vert \right)  dxdy\leq C,~\forall
n\geq1,
\]
\textit{for some constant }$C>0.$\textit{ Moreover, }%
\[
\underset{n\rightarrow\infty}{\lim}\underset{%
\mathbb{R}
^{N}}{%
{\displaystyle\int}
}\underset{%
\mathbb{R}
^{N}}{%
{\displaystyle\int}
}\frac{\left\vert g(x)-g(y)\right\vert ^{p}}{\left\vert x-y\right\vert ^{p}%
}\rho_{n}\left(  \left\vert x-y\right\vert \right)  dxdy=K_{N,p}%
{\displaystyle\int\limits_{\mathbb{R}^{N}}}
\left\vert \nabla g(x)\right\vert ^{p}dx.
\]
\textit{Here }%
\begin{align}\label{defK}
K_{N,p}=%
{\displaystyle\int\limits_{\mathbb{S}^{N-1}}}
\left\vert e\cdot\sigma\right\vert ^{p}d\sigma
\end{align}
\textit{for any }$e\in\mathbb{S}^{N-1}$ \textit{and} $d\sigma$ \textit{is the
surface measure on} $\mathbb{S}^{N-1}$. \textit{ Here }$\left(  \rho
_{n}\right)  _{n\in%
\mathbb{N}
}$\textit{ is a sequence of nonnegative radial mollifiers satisfying}%
\begin{align*}
\underset{n\rightarrow\infty}{\lim}%
{\displaystyle\int\limits_{\tau}^{\infty}}
\rho_{n}\left(  r\right)  r^{N-1}dr  &  =0\quad \forall\tau>0,\qquad
\underset{n\rightarrow\infty}{\lim}%
{\displaystyle\int\limits_{0}^{\infty}}
\rho_{n}\left(  r\right)  r^{N-1}dr =1.
\end{align*}

Starting from the previous result and since the theory of Sobolev spaces is a fundamental tool in many branches of modern mathematics, such as harmonic analysis,
complex analysis, differential geometry and geometric analysis, partial
differential equations, etc, there has been a substantial effort to
characterize Sobolev spaces in different settings  (see e.g.,\cite{AmbDepMart},
\cite{Barb}, \cite{BHN3},\cite{bre},\cite{bre-linc},\cite{Cui}, \cite{Cui2}, \cite{KL}, \cite{NguPinSquVec2},\cite{PinSquVec}, \cite{PinSquVec2}, \cite{BM},\cite{Ponce}). 

Theorem A has been extended to the high order case by Bojarski, Ihnatsyeva and
Kinnunen \cite{BIK2011} using the high order Taylor remainder and by Borghol
\cite{B} using high order differences.

We note here, as a consequence of Theorem A, that we can characterize the
Sobolev space $W^{1,p}(\mathbb{R}^{N})$ as follows: Let $g\in L^{p}\left(
\mathbb{R}
^{N}\right)  ,~1<p<\infty.$ Then $g\in W^{1,p}\left(
\mathbb{R}
^{N}\right)  $ iff%
\begin{equation}
\sup_{0<\delta<1}\underset{\left\vert x-y\right\vert <\delta}{\underset{%
\mathbb{R}
^{N}}{%
{\displaystyle\int}
}\underset{%
\mathbb{R}
^{N}}{%
{\displaystyle\int}
}}\frac{\left\vert g(x)-g(y)\right\vert ^{p}}{\delta^{N+p}}dxdy<\infty.
\label{eq1}%
\end{equation}

Recently, Nguyen \cite{nguyen06} (see also \cite{nguyen07}), motivated by an estimate for the topological degree for the Gizburg-Landau equation (\cite{BBN23}), established some new characterizations of the
Sobolev space $W^{1,p}(\mathbb{R}^{N})$ which are closely related to Theorem
A. More precisely, he used the dual form of (\ref{eq1}) and proved the following
results:\newline

\medskip

\textbf{Theorem B. (H. M. Nguyen, \cite{nguyen06}). }\textit{Let }$1<p<\infty
.$\textit{ Then the following hold:}

\textit{(a) Let $g\in W^{1, p}(\mathbb{R} ^{N})$. Then there exists a positive
constant }$C_{N,p}$\textit{ depending only on N and p such that }%
\[
\underset{\left\vert g(x)-g(y)\right\vert >\delta}{%
{\displaystyle\int\limits_{\mathbb{R} ^{N}}}
{\displaystyle\int\limits_{\mathbb{R} ^{N}}}
}\frac{\delta^{p}}{\left\vert x-y\right\vert ^{N+p}}dxdy\leq C_{N,p}%
{\displaystyle\int\limits_{\mathbb{R} ^{N}}}
\left\vert \nabla g(x)\right\vert ^{p}dx,~\forall\delta>0,\forall g\in
W^{1,p}\left(
\mathbb{R}
^{N}\right)  .
\]

\textit{(b) If }$g\in L^{p}\left(
\mathbb{R}
^{N}\right)  $\textit{ satisfies }%
\[
\underset{0<\delta<1}{\sup}\underset{\left\vert g(x)-g(y)\right\vert >\delta}{%
{\displaystyle\int\limits_{\mathbb{R} ^{N}}}
{\displaystyle\int\limits_{\mathbb{R} ^{N}}}
}\frac{\delta^{p}}{\left\vert x-y\right\vert ^{N+p}}dxdy<\infty,
\]
\textit{then }$g\in W^{1,p}\left(
\mathbb{R}
^{N}\right)  .$

\textit{(c) For any } $g\in W^{1,p}(\mathbb{R}^{N}),$%
\[
\underset{\delta\rightarrow0}{\lim}\underset{\left\vert g(x)-g(y)\right\vert
>\delta}{%
{\displaystyle\int\limits_{\mathbb{R}^{N}}}
{\displaystyle\int\limits_{\mathbb{R}^{N}}}
}\frac{\delta^{p}}{\left| x-y\right|^{N+p}}dxdy=\frac{1}{p}K_{N,p}%
{\displaystyle\int\limits_{\mathbb{R}^{N}}}
\left\vert \nabla g(x)\right\vert ^{p}dx,
\]
where $K_{N,p}$ is as in \eqref{defK}.

The previous result has been generalized in many ways and for different spaces (see e.g. \cite{Cui,NguPinSquVec,NguSq,MSqu}). We recall in particular the folowing result proved in \cite{NguSq}\\

\textbf{Theorem C. (H. M. Nguyen, M. Squassina \cite{NguSq}). }
Let $1<p<\infty$ and $K\subset\mathbb{R}^N$ be a convex, symmetric set containing the origin and with nonempty interior. Then, for every $g\in W^{1,p}_K(\mathbb{R}^N)$,
\[
\lim_{\delta\to 0}\underset{\left\vert g(x)-g(y)\right\vert >\delta}{%
{\displaystyle\int\limits_{\mathbb{R} ^{N}}}
{\displaystyle\int\limits_{\mathbb{R} ^{N}}}
}\frac{\delta^{p}}{\| x-y\|_K ^{N+p}}dxdy=\int\limits_{\mathbb{R}^N} \|\nabla g\|_{Z^{*}_p K}^p\, dx,
\]
where $\|\cdot \|_{K}$ is the norm in $\mathbb{R}^{N}$ which admits
as unit ball the set $K$, i.e. $\|x\|_{K}:=\inf\{\lambda>0\ |\ \frac{x}{\lambda}\in K \}$, $\|\cdot\|_{Z^{*}_p K}$ is the norm associated with the $L_p$ polar body of $K$, namely
\begin{equation}
\|v\|_{Z^{*}_p K}=\left(\frac{N+p}{p}\int_K |v\cdot x|^p\, dx\right)^{1/p},\qquad v\in \mathbb{R}^N
\end{equation}
and $W^{1,p}_K(\mathbb{R}^N)$ is the associated Sobolev space.\\

The main purpose of this paper is to generalize Theorem A and Theorem C to high-order anisotropic Sobolev spaces. In order to describe our main results we recall the following notation (\cite{B}): 
Let $f\in W^{k,p}(\Omega)$ and
$\sigma=(\sigma_{1},\ldots, \sigma_{N})\in\mathbb{R}^{N}$, we denote 

\[
D^{k}f\left(  x\right)  \left(  \sigma,...,\sigma\right)  =%
{\displaystyle\sum\limits_{1\leq i_{1},...,i_{k}\leq N}}
\sigma_{i_{1}}...\sigma_{i_{k}}\frac{\partial^{k}f}{\partial x_{i_{1}%
}...\partial x_{i_{k}}}\left(  x\right),\qquad \mbox{a.e.}\ x\in\Omega 
\]
and, for every $m\in\mathbb{N}$
\begin{align}\label{defR}
R^{m}f(x,y)=\sum_{j=0}^{m}(-1)^{j}\binom{m}{j}f\left(  \frac{m-j}{m}x+\frac
{j}{m}y\right).
\end{align}
\begin{theorem}
\label{conb} Let $K\subset\mathbb{R}^N$ be a convex, symmetric set containing the origin and with nonempty interior. Let $f\in W^{m,p}(\mathbb{R}^{N})$ with $m\in\mathbb{N}$ and $1<p<\infty$. Then
\begin{align}\label{54}
\underset{\delta\rightarrow0}{\lim}\underset{\left\vert R^mf\left(
x,y\right)  \right\vert >\delta}{%
{\displaystyle\int\limits_{\mathbb{R}^{N}}}
{\displaystyle\int\limits_{\mathbb{R}^{N}}}
}\frac{\delta^{p}}{\left\Vert x-y\right\Vert _{K}^{N+mp}}dxdy=\frac{N+mp}{m^{mp+1}p}%
{\displaystyle\int\limits_{\mathbb{R}^{N}}}
\int\limits_{K}\left\vert D^{m}f(x)(y,...,y)\right\vert ^{p}dydx.
\end{align}
\end{theorem}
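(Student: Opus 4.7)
The plan is to establish the identity first for $f\in C_c^\infty(\mathbb{R}^N)$ by an explicit Taylor-and-scaling computation, and then to extend to all of $W^{m,p}(\mathbb{R}^N)$ by density combined with a uniform upper bound. The first reduction, after the change of variables $h=y-x$, is to introduce anisotropic polar coordinates adapted to $K$: write $h=r\omega$ with $r=\|h\|_K$ and $\omega\in\partial K:=\{\zeta\in\mathbb{R}^N:\|\zeta\|_K=1\}$, and use the decomposition $dh=r^{N-1}\,dr\,d\sigma_K(\omega)$, where $\sigma_K$ is the cone measure on $\partial K$. This converts the weight $\|x-y\|_K^{-N-mp}$ into the homogeneous factor $r^{-mp-1}\,dr$ in the radial variable, which is what makes the inner integral explicit.

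Next, observe that $R^mf(x,x+h)=(-1)^m\Delta^m_{h/m}f(x)$ is an $m$-th order symmetric finite difference, so for $f\in C^m$ Taylor's formula yields
\[
R^mf(x,x+h)=\tfrac{(-1)^m}{m^m}\,D^mf(x)(h,\ldots,h)+o(|h|^m),\qquad h\to 0.
\]
Setting $A(x,\omega):=|D^mf(x)(\omega,\ldots,\omega)|$, the constraint $|R^mf(x,x+h)|>\delta$ becomes, up to lower-order corrections, $r>m\,\delta^{1/m}A(x,\omega)^{-1/m}$. The radial integral evaluates to $\int_{m\delta^{1/m}/A^{1/m}}^{\infty}r^{-mp-1}\,dr=\frac{A^p}{mp\,m^{mp}\delta^p}$; multiplying by $\delta^p$ and integrating in $\omega$ and $x$ produces the prefactor $\frac{1}{m^{mp+1}p}$ in front of $\int_{\mathbb{R}^N}\int_{\partial K}A(x,\omega)^p\,d\sigma_K(\omega)\,dx$. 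A second polar decomposition inside $K$, exploiting the $m$-homogeneity $D^mf(x)(r\omega,\ldots,r\omega)=r^m D^mf(x)(\omega,\ldots,\omega)$, gives $\int_K|D^mf(x)(y,\ldots,y)|^p\,dy=(N+mp)^{-1}\int_{\partial K}A(x,\omega)^p\,d\sigma_K(\omega)$, and substituting this relation recovers the exact constant $\frac{N+mp}{m^{mp+1}p}$ appearing in \eqref{54}.

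The main obstacle is the density/approximation step. To pass from $C_c^\infty$ to $W^{m,p}$ one needs a uniform upper bound
\[
\sup_{\delta>0}\int_{\mathbb{R}^N}\int_{\{|R^mg(x,y)|>\delta\}}\frac{\delta^p}{\|x-y\|_K^{N+mp}}\,dy\,dx\le C\,\|D^mg\|_{L^p(\mathbb{R}^N)}^p
\]
valid for every $g\in W^{m,p}(\mathbb{R}^N)$, which is the high-order anisotropic analogue of Theorem~B(a). The natural route is to represent $R^mg(x,x+h)$ as an iterated integral of $D^mg$ along the arithmetic progression $x+\tfrac{j}{m}h$, $j=0,\dots,m$, and then to apply a Hardy--Littlewood maximal function argument along the lines of Nguyen--Squassina \cite{NguSq}, which dominates both the main term and the Taylor remainder by an $L^p$-function of $x$ uniformly in $\delta$. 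Replacing $|\cdot|$ by $\|\cdot\|_K$ is in principle bookkeeping thanks to the equivalence of norms and the symmetry/convexity of $K$, but making this bookkeeping precise in the presence of the anisotropic cone measure $\sigma_K$, and showing that the Taylor error term contributes only $o(1)$ to the limit, is where most of the technical work will lie.
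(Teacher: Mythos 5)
Your computational core is essentially equivalent to the paper's: after a radial change of variables the level-set integral becomes explicit, and the constant $\frac{N+mp}{m^{mp+1}p}$ arises from the radial integral combined with the homogeneity identity $\int_K|D^mf(x)(y,\ldots,y)|^p\,dy=(N+mp)^{-1}\int_{\partial K}|D^mf(x)(\omega,\ldots,\omega)|^p\,d\sigma_K(\omega)$. Your choice of anisotropic polar coordinates $h=r\omega$, $r=\|h\|_K$, $\omega\in\partial K$, with the cone measure $\sigma_K$ is merely a cosmetic reparametrization of what the paper does: they use Euclidean polar coordinates $h=t\sigma$, $\sigma\in\mathbb{S}^{N-1}$, which introduces a factor $\|\sigma\|_K^{-(N+mp)}$; the two are related via $\omega=\sigma/\|\sigma\|_K$, $d\sigma_K(\omega)=\|\sigma\|_K^{-N}d\sigma$, and the paper's identity \eqref{id} is exactly your ``second polar decomposition inside $K$''.

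The structural difference, and where your proposal has a genuine gap, is the passage from $C_c^\infty$ to $W^{m,p}$. The paper does not use density here: instead it proves the formula directly for $f\in W^{m,p}$ by dominated convergence, using the identity \eqref{meanva1} to get pointwise a.e.\ convergence of $\delta^{-1}h^{-m}\Delta^m_{\sqrt[m]{\delta}h\sigma/m}f(x)$ to $m^{-m}D^mf(x)(\sigma,\ldots,\sigma)$ at Lebesgue points, and the directional maximal function $\mathbb{M}_N(\partial_{x_N}^m f)$ as an $L^1$ dominant for the indicator times $h^{-mp-1}$, uniformly in $\delta$. Once that dominant is in hand, no approximation by smooth functions is needed. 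Your outline, by contrast, plans to prove the result for $C_c^\infty$ and then ``extend by density combined with a uniform upper bound''. That step is not as straightforward as stated: the uniform bound $\sup_\delta I_\delta(g)\lesssim\|D^m g\|_{L^p}^p$ alone does \emph{not} control $|I_\delta(f)-I_\delta(g)|$, because the level set $\{|R^m f|>\delta\}$ does not depend linearly or subadditively on $f$. To make a density argument work one needs the inclusion
\begin{equation*}
\bigl\{|R^m f|>\delta\bigr\}\subset\bigl\{|R^m g|>(1-\epsilon)\delta\bigr\}\cup\bigl\{|R^m(f-g)|>\epsilon\delta\bigr\},
\end{equation*}
which, after rescaling $\delta$ in the weight, gives $I_\delta(f)\le(1-\epsilon)^{-p}I_{(1-\epsilon)\delta}(g)+\epsilon^{-p}\,C\,\|D^m(f-g)\|_{L^p}^p$; one must then send $\delta\to0$, then $g\to f$, and only at the end let $\epsilon\to0$ to recover the sharp constant. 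Your write-up omits this level-set splitting entirely. Moreover, since you already claim a dominant that is ``an $L^p$-function of $x$ uniformly in $\delta$'', you could skip density altogether and argue by dominated convergence directly on $W^{m,p}$, which is exactly what the paper does and is cleaner. I would also flag that even for $f\in C_c^\infty$ the reduction of the constraint $|R^m f(x,x+h)|>\delta$ to $r>m\delta^{1/m}A(x,\omega)^{-1/m}$ ``up to lower-order corrections'' needs a careful splitting into small $r$ (where the Taylor remainder is negligible) and bounded-below $r$ (where the contribution vanishes as $\delta\to 0$ because the weight $\delta^p r^{-1-mp}$ does); this is implicit in the dominated-convergence framing but must not be swept under ``lower-order corrections''.
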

Notice that taking $m=1$ in the previous theorem we get Theorem C and taking $m=2$ and $\|\cdot\|$ as the Euclidean norm we get \cite[Theorem 1.1]{Cui2}.\\
Our next result is the analogous of \cite[Theorem 4]{B} in our setting.
\begin{theorem}\label{Bani}
Let $K\subset\mathbb{R}^N$ be a convex, symmetric set containing the origin with nonempty interior.
Let $(\rho_{\varepsilon})_{\varepsilon}$ be a family of functions $\rho_{\varepsilon}:[0,\infty)\to [0,\infty)$ satisfying the following conditions
\begin{align}\label{propq}
\int_{0}^{\infty}r^{N-1}\rho_{\varepsilon}(r)dr=1\quad\text{ and }\quad\lim_{\varepsilon\to 0}\int_{\delta}^{\infty
}r^{N-1}\rho_{\varepsilon}(r)dr=0\qquad \forall\ \delta>0
\end{align}
Let $f\in W^{m,p}(\mathbb{R}^{N})$ with $m\in\mathbb{N}$ and $1<p<\infty$, then
\begin{equation}
\lim_{\varepsilon\to0} \int_{\mathbb{R}^{N}}\int_{\mathbb{R}^{N}} \frac{|R^{m}
f(x,y)|^{p}}{\|x-y\|^{mp}_{K}} \rho_{\varepsilon}(\|x-y\|_{K})\ dxdy =
\frac{N+mp}{m^{mp}}\int_{\mathbb{R}^{N}}\int_{K}|D^{m} f(x)(y,\cdots
,y)|^{p} dy dx.
\end{equation}
\end{theorem}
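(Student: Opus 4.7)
\medskip
\noindent My plan is to prove the identity first for $f\in C_{c}^{\infty}(\mathbb{R}^{N})$ via a Taylor expansion, and then extend it to arbitrary $f\in W^{m,p}(\mathbb{R}^{N})$ by density, using an $\varepsilon$-uniform estimate for the left-hand side. The two conditions in \eqref{propq} play distinct roles: the first normalization eliminates the radial integral in the main term, while the second kills any contribution where $\|x-y\|_{K}$ stays bounded away from $0$.

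\medskip
\noindent \emph{Smooth case.} Setting $h=(y-x)/m$, one has $R^{m}f(x,y)=(-1)^{m}\Delta^{m}_{h}f(x)$, the $m$-th forward difference of $f$ at step $h$. For $f\in C_{c}^{\infty}(\mathbb{R}^{N})$, Taylor's theorem yields
\[
R^{m}f(x,y)=\frac{(-1)^{m}}{m^{m}}D^{m}f(x)(y-x,\ldots,y-x)+E(x,y),\qquad |E(x,y)|\le C_{f}\,\|x-y\|_{K}^{m+1}.
\]
Substituting this on the left-hand side, I change variables $z=y-x$ and then use $\|\cdot\|_{K}$-polar coordinates $z=r\omega$ with $r=\|z\|_{K}$, $\omega\in\partial K$, $dz=r^{N-1}\,d\mu(\omega)\,dr$, where $\mu$ is the induced measure on $\partial K$ satisfying $\mu(\partial K)=N|K|$. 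The $m$-homogeneity of $D^{m}f(x)(\cdot,\ldots,\cdot)$ cancels $r^{mp}$ in the denominator, and the first condition in \eqref{propq} integrates the radial variable to $1$, leaving
\[
\frac{1}{m^{mp}}\int_{\mathbb{R}^{N}}\int_{\partial K}|D^{m}f(x)(\omega,\ldots,\omega)|^{p}\,d\mu(\omega)\,dx.
\]
A further polar integration over $K$ using the same homogeneity shows $\int_{\partial K}|D^{m}f(x)(\omega,\ldots,\omega)|^{p}d\mu(\omega)=(N+mp)\int_{K}|D^{m}f(x)(y,\ldots,y)|^{p}dy$, which is exactly the right-hand side. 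The contribution of the error $E$, together with the regime $\|x-y\|_{K}\ge\tau$, is controlled by splitting the integration at $\|x-y\|_{K}=\tau$: on $\{\|x-y\|_{K}<\tau\}$ the Taylor remainder together with the bound $|R^{m}f|\le C\|x-y\|_{K}^{m}$ yields an $O(\tau)$ estimate uniform in $\varepsilon$; on $\{\|x-y\|_{K}\ge\tau\}$ the second condition in \eqref{propq} sends the integral to zero as $\varepsilon\to 0$. Letting $\tau\to 0$ at the end concludes this step.

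\medskip
\noindent \emph{Uniform bound and density.} For arbitrary $f\in W^{m,p}(\mathbb{R}^{N})$, let $I_{\varepsilon}(f)$ denote the left-hand side. The needed uniform bound $I_{\varepsilon}(f)\le C\,\|D^{m}f\|_{L^{p}}^{p}$ (with $C$ independent of $\varepsilon$) comes from the integral representation
\[
R^{m}f(x,y)=\frac{(-1)^{m}}{m^{m}}\int_{[0,1]^{m}}D^{m}f\!\left(x+\tfrac{s_{1}+\cdots+s_{m}}{m}(y-x)\right)(y-x,\ldots,y-x)\,ds,
\]
followed by Jensen's inequality in $s$ and the Jacobian-one change $(x,y)\mapsto(u,v)=(x+\tfrac{\sigma}{m}(y-x),\,y-x)$ for each fixed $\sigma=s_{1}+\cdots+s_{m}$; what remains factors as the product of $\int|D^{m}f(u)|^{p}du$ and $\int\rho_{\varepsilon}(\|v\|_{K})dv=N|K|$. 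Linearity of $f\mapsto R^{m}f$, together with Minkowski's inequality in $L^{p}$ of the measure $\rho_{\varepsilon}(\|x-y\|_{K})\|x-y\|_{K}^{-mp}\,dx\,dy$, then gives $|I_{\varepsilon}(f)^{1/p}-I_{\varepsilon}(f_{k})^{1/p}|\le C\,\|D^{m}(f-f_{k})\|_{L^{p}}$ uniformly in $\varepsilon$; combined with the smooth case and the density of $C_{c}^{\infty}(\mathbb{R}^{N})$ in $W^{m,p}(\mathbb{R}^{N})$, this closes the argument. The main obstacle I anticipate is this uniform estimate: the Jensen step and the accompanying change of variables must be arranged so that the resulting constant depends only on $N,m,p,K$ and is independent of $\varepsilon$, and one must also check the parallel continuity of the right-hand side in $W^{m,p}$.
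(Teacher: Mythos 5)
Your proposal is correct and follows essentially the same route as the paper: the smooth case by comparing $m^{mp}|R^{m}f(x,y)|^{p}\|x-y\|_{K}^{-mp}$ with $|D^{m}f(x)(\tfrac{y-x}{\|y-x\|_{K}},\ldots)|^{p}$ up to an error that is $O(\|x-y\|_{K})$ and vanishes under \eqref{propq}, and the general case by an $\varepsilon$-uniform bound plus Minkowski and density (your Jensen-plus-change-of-variables estimate is exactly the content of the paper's Lemma 3.2, quoted from Borghol). The only differences are cosmetic: the paper uses the uniform continuity of $|t|^{p}$ where you use an explicit Taylor remainder, and Euclidean spherical coordinates plus a rescaling identity where you use the cone measure on $\partial K$ — both produce the same $(N+mp)\int_{K}$ normalization.
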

Our next result results can be considered a generalization to high-order anisotropic spaces of \cite[Theorem 1.2]{Cui2}. For any $f\in W^{m,p}(\mathbb{R}^N)$ and $y\in \mathbb{R}^N$ let
\[
T_{y}^{m-1}f\left(  x\right)  =%
{\displaystyle\sum\limits_{\left\vert \alpha\right\vert \leq m-1}}
D^{\alpha}f\left(  y\right)  \frac{\left(  x-y\right)  ^{\alpha}}{\alpha!}%
\]
and
\[
R_{m-1}f\left(  x,y\right)  =f\left(  x\right)  -T_{y}^{m-1}f\left(  x\right)  .
\]
Then we will prove the following result

 \begin{theorem}\label{lam1}
Let $K\subset\mathbb{R}^N$ be a convex, symmetric set containing the origin, with nonempty interior and $f\in W^{m,p}(\mathbb{R}^{N})$ with $1<p<\infty$ and $m\in\mathbb{N}$. Then 
\[
\lim_{\delta\to0}
\underset{\left\vert R_{m-1}f\left(  x,y\right)  \right\vert >\delta}{%
{\displaystyle\int\limits_{\mathbb{R}^{N}}}
{\displaystyle\int\limits_{\mathbb{R}^{N}}}
}\frac{\delta^{p}}{\left\Vert x-y\right\Vert _{K}^{N+mp}}dxdy=\frac{N+mp}{\left(
m!\right)  ^{p}mp}%
{\displaystyle\int_{\mathbb{R}^{N}}}
\int_{K}\left\vert D^{m}f(x)(y,\cdots,y)\right\vert ^{p}dydx.
\]

\end{theorem}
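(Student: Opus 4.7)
The plan is to proceed as in the proof of Theorem~\ref{conb}, with the Taylor remainder $R_{m-1}f$ in place of the iterated difference $R^m f$. I would first establish the identity for $f\in C_c^{\infty}(\mathbb{R}^N)$ and then pass to general $f\in W^{m,p}$ by density, which requires a uniform a priori estimate
\[
I_\delta(f):=\iint_{\{|R_{m-1}f(x,y)|>\delta\}}\frac{\delta^{p}}{\|x-y\|_{K}^{N+mp}}\,dx\,dy \leq C\|f\|_{W^{m,p}}^{p},\qquad \delta\in(0,1].
\]
To obtain it I would split the integration into $\{\|x-y\|_K\leq 1\}$ and $\{\|x-y\|_K>1\}$. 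On the inner region, Chebyshev's inequality gives $I_\delta(f)\leq\iint\frac{|R_{m-1}f|^{p}}{\|x-y\|_{K}^{N+mp}}\,dx\,dy$, which is controlled by $\|D^m f\|_p^p$ through pointwise Calder\'on-type estimates relating $R_{m-1}f$ to the Hardy--Littlewood maximal function of $D^m f$; on the outer region the bound $|R_{m-1}f(x,y)|\leq |f(x)|+\sum_{|\alpha|\leq m-1}|D^\alpha f(y)|\|x-y\|_K^{|\alpha|}/\alpha!$ together with the integrability of the kernel $\|x-y\|_K^{-N-mp}$ at infinity closes the estimate.

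\textbf{Rescaling and pointwise convergence.} For $f\in C_c^{\infty}(\mathbb{R}^N)$ I would perform the change of variables $h=x-y$ followed by the rescaling $h=\delta^{1/m}u$. Since $\|\cdot\|_K$ is $1$-homogeneous, the Jacobian $\delta^{N/m}$ and the factor $\delta^p$ exactly cancel the $\delta$-dependence of the kernel, producing
\[
I_\delta(f) = \iint \chi_{\{|R_{m-1}f(y+\delta^{1/m}u,\,y)|>\delta\}}\,\frac{du\,dy}{\|u\|_{K}^{N+mp}}.
\]
Taylor's theorem with integral remainder gives $R_{m-1}f(y+h,y) = \frac{1}{m!}D^{m}f(y)(h,\ldots,h) + o(\|h\|_K^m)$ as $h\to 0$, hence after dividing by $\delta$ the indicator converges a.e.\ to $\chi_{\{|D^{m}f(y)(u,\ldots,u)|>m!\}}$. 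Dominated convergence can then be applied using the Taylor bound $|R_{m-1}f(y+h,y)|\leq C\|D^m f\|_{\infty}\|h\|_K^m$ on the support of $f$, together with the compactness of the support to truncate the tail in $u$.

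\textbf{Polar evaluation and extension to $W^{m,p}$.} Using anisotropic polar coordinates $u=r\omega$ with $r=\|u\|_K$ and $\omega\in\partial K$, and exploiting the $m$-homogeneity of $u\mapsto D^m f(y)(u,\ldots,u)$, a direct computation gives
\[
\int_{\{|D^{m}f(y)(u,\ldots,u)|>m!\}}\frac{du}{\|u\|_{K}^{N+mp}} = \frac{1}{mp\,(m!)^{p}}\int_{\partial K}|D^{m}f(y)(\omega,\ldots,\omega)|^{p}\,d\mu_{K}(\omega),
\]
while the polar identity $\int_{K}|D^{m}f(y)(u,\ldots,u)|^{p}\,du = \frac{1}{N+mp}\int_{\partial K}|D^{m}f(y)(\omega,\ldots,\omega)|^{p}\,d\mu_{K}(\omega)$ converts the right-hand side into $\frac{N+mp}{mp\,(m!)^{p}}\int_{K}|D^{m}f(y)(u,\ldots,u)|^{p}\,du$. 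Integrating in $y$ proves the identity for smooth $f$; the a priori bound of the first step together with continuity of both sides of the identity in the $W^{m,p}$ norm then extend it to all of $W^{m,p}(\mathbb{R}^N)$. The principal obstacle is the uniform a priori estimate: unlike the balanced difference $R^m f$, the Taylor remainder $R_{m-1}f$ grows polynomially in $\|x-y\|_K$, so the outer-region contribution requires a careful handling of the Taylor polynomial tail; a secondary technical point is the construction of the dominating function in the rescaled integral, which must account both for the near-zero behavior of $u$ (where the indicator eventually vanishes) and for the far field (where the compact support of $f$ forces suitable truncation).
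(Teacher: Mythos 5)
Your overall architecture (rescale $h=\delta^{1/m}u$, pass to the limit in the indicator, evaluate in anisotropic polar coordinates to get the constant $\tfrac{N+mp}{mp\,(m!)^{p}}$) matches the paper's, and your polar computation is correct. But the a priori estimate, which you yourself identify as the principal obstacle, is broken at the Chebyshev step. The bound
\[
\iint_{\{|R_{m-1}f|>\delta\}\cap\{\|x-y\|_K\le 1\}}\frac{\delta^{p}}{\|x-y\|_{K}^{N+mp}}\,dx\,dy\;\le\;\iint_{\{\|x-y\|_K\le 1\}}\frac{|R_{m-1}f(x,y)|^{p}}{\|x-y\|_{K}^{N+mp}}\,dx\,dy
\]
is useless because the right-hand side is infinite for every $f$ with $D^{m}f\not\equiv 0$: near the diagonal $|R_{m-1}f(x,y)|\simeq \tfrac{1}{m!}|D^{m}f(y)(x-y,\dots,x-y)|\simeq c\,\|x-y\|_K^{m}$, so the integrand behaves like $c^{p}\|x-y\|_K^{-N}$, which is not locally integrable. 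This divergence is precisely why Nguyen-type functionals cannot be estimated by discarding the level-set constraint; no Calder\'on maximal-function bound on $|R_{m-1}f|$ can repair it afterwards. The paper's Lemma (the analogue of Lemma \ref{pols}, built on \eqref{killall}) keeps the constraint: after polar coordinates and reduction to $\sigma=e_N$, the bound $|R_{m-1}f(x,x+te_N)|\le \tfrac{t^{m}}{m!}\mathbb{M}_N(\partial_{x_N}^{m}f)(x)$ forces $t^{m}\mathbb{M}_N(\partial_{x_N}^{m}f)(x)>m!\,\delta$ on the level set, and then
\[
\int_{\{t:\,t^{m}\mathbb{M}_N(\partial_{x_N}^{m}f)(x)>m!\,\delta\}}\frac{\delta^{p}}{t^{1+mp}}\,dt=\frac{1}{mp\,(m!)^{p}}\,\mathbb{M}_N(\partial_{x_N}^{m}f)(x)^{p},
\]
which is finite exactly because the $\delta^{p}$ cancels against the $\delta$-dependent lower limit of integration; the $L^{p}$ boundedness of the directional maximal function finishes the job. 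You need this level-set computation, not Chebyshev.

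A secondary weak point is the final density step. The map $f\mapsto I_\delta(f)$ is not a $p$-th power of a norm and is not obviously continuous in $W^{m,p}$ uniformly in $\delta$ (this is a known delicate point for Nguyen's functionals, unlike the BBM-type integrals of Theorem \ref{lam2} where Minkowski's inequality applies). The paper sidesteps density entirely: it runs the dominated convergence argument directly for $f\in W^{m,p}(\mathbb{R}^N)$, using $h^{-mp-1}\mathbf{1}_{\{h^{m}\mathbb{M}_N(\partial_{x_N}^{m}f)(x)>m!\}}$ as the dominating function and the a.e.\ validity of the integral representation \eqref{stimaT} for Sobolev functions to get the pointwise limit of the indicators. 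If you insist on first treating $C_c^{\infty}$ and then passing to the limit, you must justify the uniform-in-$\delta$ stability of $I_\delta$ under $W^{m,p}$-perturbations, which is itself a nontrivial lemma.
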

\begin{theorem}\label{lam2}
Let $K\subset\mathbb{R}^N$ be a convex, symmetric set containing the origin with nonempty interior and $f\in W^{m,p}(\mathbb{R}^{N})$ with $1<p<\infty$ and $m\in\mathbb{N}$. Then 
$$ \underset{\varepsilon\rightarrow0}{\lim}\int_{\mathbb{R}^{N}}\int
_{\mathbb{R}^{N}}\frac{\left\vert R_{m-1}f\left(  x,y\right)  \right\vert
^{p}}{\left\Vert x-y\right\Vert _{K}^{mp}}\rho_{\varepsilon}\left(  \left\Vert
x-y\right\Vert _{K}\right)  dxdy\\
 =\frac{N+mp}{\left(  m!\right)  ^{p}}\int_{\mathbb{R}^{N}}\int_{K}\left\vert D^{m}f(x)(y,\cdots,y)\right\vert
^{p}dydx.
$$
Here the family $(\rho_{\varepsilon})_{\varepsilon}$ is as in Theorem \ref{Bani}.
\end{theorem}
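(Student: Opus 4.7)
The plan mirrors the proof of Theorem~\ref{Bani}, with the finite difference $R^m f$ replaced by the Taylor remainder $R_{m-1}f$. The argument has three ingredients: a uniform-in-$\varepsilon$ Hardy-type control, an explicit computation for $f\in C_c^\infty(\mathbb{R}^N)$, and a density argument.

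First I would establish the Hardy-type estimate
$$\int_{\mathbb{R}^N}\!\int_{\mathbb{R}^N} \frac{|R_{m-1}f(x,y)|^p}{\|x-y\|_K^{mp}}\,\rho_{\varepsilon}(\|x-y\|_K)\,dx\,dy \;\le\; C\int_{\mathbb{R}^N}|D^m f(x)|^p\,dx,$$
with $C$ independent of $\varepsilon$. Starting from the integral form of the Taylor remainder,
$$R_{m-1}f(y+z,y) = \frac{1}{(m-1)!}\int_0^1 (1-t)^{m-1}\,D^m f(y+tz)(z,\ldots,z)\,dt,$$
the estimate $|D^m f(y+tz)(z,\ldots,z)|\le C\|z\|_K^m\,|D^m f(y+tz)|$ (equivalence of $\|\cdot\|_K$ with the Euclidean norm, since $K$ is a bounded convex body), Jensen applied to the $t$-integral, Fubini, and the substitution $y\mapsto y-tz$ reduce the double integral to $\bigl(\int_{\mathbb{R}^N}\rho_\varepsilon(\|z\|_K)\,dz\bigr)\|D^m f\|_p^p$. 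The first normalization in \eqref{propq}, together with the $K$-polar identity $\int_{\mathbb{R}^N}\rho_\varepsilon(\|z\|_K)\,dz=N|K|\int_0^\infty r^{N-1}\rho_\varepsilon(r)\,dr=N|K|$, closes the estimate.

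Next I would prove the identity for $f\in C_c^\infty(\mathbb{R}^N)$. After the change $z=x-y$, I split at $\|z\|_K=\delta$. On $\|z\|_K\ge\delta$ the integrand is uniformly bounded on a $y$-set of bounded measure (since $R_{m-1}f(y+z,y)$ vanishes when both $y$ and $y+z$ lie outside $\mathrm{supp}\,f$, while $|R_{m-1}f(y+z,y)|\le C(1+|z|^{m-1})$ elsewhere), so the tail is controlled by $C_\delta\int_\delta^\infty r^{N-1}\rho_\varepsilon(r)\,dr\to 0$ by the second condition in \eqref{propq}. For $\|z\|_K<\delta$ I use the expansion
$$R_{m-1}f(y+z,y)=\tfrac{1}{m!}D^m f(y)(z,\ldots,z)+o(\|z\|_K^m),$$
uniform in $y$ by the uniform continuity of $D^m f$ on $\mathrm{supp}\,f$, to replace the integrand by $(m!)^{-p}|D^m f(y)(z,\ldots,z)|^p/\|z\|_K^{mp}$ up to an $o(1)$ error absorbed as $\delta\to 0$. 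The leading integral is then evaluated by polar decomposition with respect to $\|\cdot\|_K$: writing $z=rw$ with $w\in\partial K$ and denoting by $d\mu_K$ the associated cone measure, the degree-zero homogeneity in $z$ gives
$$\int_{\mathbb{R}^N}\frac{|D^m f(y)(z,\ldots,z)|^p}{\|z\|_K^{mp}}\,\rho_\varepsilon(\|z\|_K)\,dz \;=\;\int_{\partial K}|D^m f(y)(w,\ldots,w)|^p\,d\mu_K(w),$$
and a second polar decomposition, using degree-$mp$ homogeneity of $w\mapsto|D^m f(y)(w,\ldots,w)|^p$, yields the identity $\int_{\partial K}|D^m f(y)(w,\ldots,w)|^p\,d\mu_K(w)=(N+mp)\int_K|D^m f(y)(w,\ldots,w)|^p\,dw$. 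Integrating in $y$ produces the constant $(N+mp)/(m!)^p$.

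Finally I would pass from $C_c^\infty$ to $W^{m,p}$ by density. The map $g\mapsto A_\varepsilon(g)^{1/p}$, with $A_\varepsilon(g)$ the left-hand side of the claim, is a weighted $L^p$ seminorm in $R_{m-1}g$ and hence obeys the triangle inequality, so the Hardy bound gives $|A_\varepsilon(f)^{1/p}-A_\varepsilon(f_n)^{1/p}|\le A_\varepsilon(f-f_n)^{1/p}\le C\|D^m(f-f_n)\|_p$ uniformly in $\varepsilon$; the analogous continuity for the right-hand side is immediate since $|D^m g(y)(w,\ldots,w)|\le C|D^m g(y)|$ on $K$. A three-$\varepsilon$ argument against $f_n\to f$ in $W^{m,p}$ then transfers the identity to arbitrary $f\in W^{m,p}$. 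The main obstacle is the uniform Hardy bound of the first step; once it is in place, the remaining work (polar decomposition with respect to $\|\cdot\|_K$ and controlled Taylor expansion) parallels the machinery already used in Theorems~\ref{conb} and~\ref{Bani}.
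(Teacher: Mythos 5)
Your proposal is correct and follows essentially the same route as the paper: a uniform-in-$\varepsilon$ Hardy-type bound on the Taylor remainder (the paper cites the estimate $\int|R_{m-1}f(x+h,x)|^p\,dx\lesssim|h|^{mp}\|\nabla^m f\|_p^p$ from Bojarski--Ihnatsyeva--Kinnunen where you derive it directly from the integral remainder formula), an explicit computation for $f\in C_c^\infty$ via Taylor expansion plus anisotropic polar coordinates yielding the factor $(N+mp)\int_K$, and a density argument (Minkowski's inequality in your version, the elementary inequality $|a|^p\le(1+\tau)|b|^p+C(\tau)|a-b|^p$ in the paper's). These are cosmetic variations on the same proof.
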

The plan of the paper is the following: In Section 2, we will introduce some terminology. In Section 3 we will prove Theorems \ref{Bani} and \ref{conb}. Finally, in Section 4, we will establish Theorems \ref{lam1} and \ref{lam2} .

\section{Anisotropic spaces}
Let $|x|$ be the Euclidean norm of $x\in \mathbb{R}^N$ and let us fix a convex, symmetric subset $K\subset\mathbb{R}^N$ containing the origin with nonempty interior.
For a multi-index $\alpha=(\alpha_1\ldots,\alpha_N)$, $\alpha_i\geq 0$ and a point $x=(x_1,\ldots, x_N)\in \mathbb{R}^N$ we denote by
\[
x^{\alpha}=\prod_{i=1}^N x_i^{\alpha_i}\qquad \mbox{and}\qquad |\alpha|=\sum_{i=1}^N \alpha_i.
\]
In the same way 
\[
D^{\alpha}u=\frac{\partial^{|\alpha|}u}{\partial_{x_1}^{\alpha_1}\ldots \partial_{x_N}^{\alpha_N}}
\]
is a weak partial derivative of order $u$. We also denote by $\nabla^m u$ a vector with components $D^{\alpha}u$, $|\alpha|=m$.
 As in Theorem C, we denote by $\|\cdot\|_{K}$ be the norm 
$\|x\|_K=\inf\{\lambda>0\ |\ x/\lambda\in K\}$ and we observe that since all norms on $\mathbb{R}^{N}$ are equivalent, there are $A,B>0$ such that
\begin{equation}
A\left\vert \cdot\right\vert \leq\Vert\cdot\Vert_{K}\leq B\left\vert
\cdot\right\vert \label{*}
\end{equation}
Now let $y\in K$ and denote by $E$ the vector with components $E_{\alpha}=(1/\alpha!)y_1^{\alpha_1}\cdot y_N^{\alpha_n}$, $|\alpha|=m$. Then, for every $m\in\mathbb{N}$ and $1\leq p<\infty$ the function
\[
\|v\|_{Z_{p,m}^{*}K}=\left(\frac{N+mp}{m^{mp+1}p}\int_K |v\cdot E|^p dy\right)^{1/p}
\] 
is a norm on a linear space of all vectors $v=(v_{\alpha})_{|\alpha|=m}$ \cite{BIK2011}. 
Set
\[
[u]_{W^{m,p}_K(\mathbb{R}^N)}=\left(\int_{\mathbb{R}^N}\|\nabla^m u\|_{Z_{p,m}^{*}K}^p dx\right)^{\frac{1}{p}}.
\]
Fix $1\leq p<\infty$ and $m\in\mathbb{N}$, we denote by $W^{m,p}_{K}(\mathbb{R}^N)$ the space of $u\in L^p(\mathbb{R}^N)$ such that $[u]_{W^{m,p}_K(\mathbb{R}^N)}<\infty$ endowed with the norm
\[
\|u\|_{W^{m,p}_K(\mathbb{R}^N)}=\left(\|u\|_{L^p(\mathbb{R}^N)}+[u]_{W^{m,p}_K(\mathbb{R}^N)}\right)^{\frac{1}{p}}.
\]
Clearly for $m=1$ the space defined above coincides with the one studied in \cite{NguSq}, moreover taking $K=B(0,1)$ it is easy to see that $W^{m,p}_{K}(\mathbb{R}^N)=\dot{W}^{m,p}(\mathbb{R}^N)$, here $\dot{W}^{m,p}(\mathbb{R}^N)$ denotes the homogenous Sobolev space as defined in \cite[Definition 11.17]{Leoni}.\\
We will use the following notation: Given two quantities $f$ and $g$ we write $f\lesssim  g$ if there exists $C>0$ such that $f\leq Cg$.
\section{Proofs of Theorems \ref{conb} and \ref{Bani}}

Let $m\geq 1$.
Set $\Delta_{h}f(x)=\Delta_h^1f(x):=f(x+h)-f(x)$, we call m-th difference the quantity
$\Delta_{h}^{m}f(x)=\Delta_{h}\left[  \Delta_{h}^{m-1}f\right]  (x)$. By
above definition, it is not difficult to show that for any positive integer
$m$, we have
\[
\Delta_{h}^{m}f(x)=\sum_{j=0}^{m}(-1)^{m+j}\left(
\begin{array}
[c]{c}%
m\\
j
\end{array}
\right)  f(x+jh)
\]
and, by \cite[Lemma 8]{B}, we also have
\begin{equation}
\Delta_{{h}}^{m}f(x)=\int\limits_{{{[0,1]}^{m}}}{D^{m}}f\left(  x+\sum
\limits_{j=1}^{m}{{t_{j}}{h}}\right)  (h,\cdots,h)dt_{1}...dt_{m}
\label{meanva1}%
\end{equation}
where $[0,1]^m$ denotes the unit-cube in $\mathbb{R}^m$.
Finally, it is easy to see that for every $x,h\in\mathbb{R}^{N}$
\[
R^{m}f(x,x+mh)=(-1)^{m}\Delta_{h}^{m}f(x)
\]
where $R^m f(x,y)$ is as in \eqref{defR}. 
\subsection{Nguyen's formula}

The aim of this section is to prove Theorem \ref{conb}. We start with the following:

\begin{lemma}\label{pols} Let $K\subset\mathbb{R}^N$ be a convex, symmetric set containing the origin with nonempty interior and $f\in W^{m,p}(\mathbb{R}^{N})$ with $1<p<\infty$ and $m\in\mathbb{N}$.
There exists $C=C(m,N,p)>0$ s.t.
\[
\underset{\left\vert R^{m}f\left(  x,y\right)  \right\vert >\delta}{%
{\displaystyle\int\limits_{\mathbb{R}^{N}}}
{\displaystyle\int\limits_{\mathbb{R}^{N}}}
}\frac{\delta^{p}}{\Vert x-y\Vert^{N+mp}_{K}}dxdy\leq C\Vert\nabla
^{m}f\Vert_{L^{p}(\mathbb{R}^{N})}^p.%
\]

\end{lemma}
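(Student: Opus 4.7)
The plan is to reduce to the Euclidean case, control $|R^m f(x,y)|$ pointwise by the Hardy--Littlewood maximal function of $\nabla^m f$, and then conclude via the $L^p$-boundedness of the maximal operator (which is exactly where the hypothesis $p>1$ enters).

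First, \eqref{*} gives $\|x-y\|_K^{N+mp}\geq A^{N+mp}|x-y|^{N+mp}$, so it suffices to work with $|x-y|$ in the denominator at the cost of a multiplicative constant. Second, using the identity $R^m f(x,x+mh)=(-1)^m\Delta_h^m f(x)$ together with the integral representation \eqref{meanva1} and the substitution $h=(y-x)/m$, Fubini on the $m$-fold integral over $(t_1,\ldots,t_m)\in[0,1]^m$ (the density of $(t_1+\cdots+t_m)/m$ being the bounded Irwin--Hall density) yields
\[
|R^m f(x,y)|\leq C_m\,|y-x|^m\int_0^1 |\nabla^m f(x+r(y-x))|\,dr.
\]
The decisive step is then to upgrade this to the pointwise Haj\l asz-type bound
\[
|R^m f(x,y)|\leq C_{m,N}\,|y-x|^m\bigl(\mathcal{M}|\nabla^m f|(x)+\mathcal{M}|\nabla^m f|(y)\bigr),
\]
where $\mathcal{M}$ denotes the Hardy--Littlewood maximal operator. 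One route is to exploit the fact that $R^m$ annihilates polynomials of degree less than $m$, so $R^m f=R^m(f-T_x^{m-1}f)=R^m(f-T_y^{m-1}f)$, and then to estimate each Taylor remainder $f(z_j)-T_x^{m-1}f(z_j)$ at the points $z_j=\tfrac{m-j}{m}x+\tfrac{j}{m}y$ via the standard Riesz potential representation localized to $B(x,|x-y|)$ (and symmetrically $B(y,|x-y|)$), much as in \cite{B,BIK2011}.

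With the pointwise bound in hand, set $G(x,y):=\mathcal{M}|\nabla^m f|(x)+\mathcal{M}|\nabla^m f|(y)$. The condition $|R^m f(x,y)|>\delta$ forces $|y-x|>c(\delta/G(x,y))^{1/m}$, and I would split the double integral according to whether $\mathcal{M}|\nabla^m f|(x)\geq\mathcal{M}|\nabla^m f|(y)$ or not. On the first region $G\leq 2\mathcal{M}|\nabla^m f|(x)$, so for fixed $x$ polar coordinates give
\[
\int_{|y-x|>c(\delta/(2\mathcal{M}|\nabla^m f|(x)))^{1/m}}\frac{\delta^p}{|y-x|^{N+mp}}\,dy\;=\;C\,\bigl(\mathcal{M}|\nabla^m f|(x)\bigr)^p.
\]
Integrating in $x$ and invoking the $L^p$-boundedness of $\mathcal{M}$ (valid because $p>1$) yields the required bound $C\|\nabla^m f\|_{L^p}^p$; the symmetric piece is handled identically.

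The main obstacle is the pointwise Haj\l asz-type estimate: the naive passage from the line-segment integral of $|\nabla^m f|$ to $\mathcal{M}|\nabla^m f|$ is not a legitimate pointwise inequality, since one-dimensional directional averages dominate the strong maximal function rather than the Hardy--Littlewood one, and so the Taylor-plus-Riesz-potential detour is genuinely needed. Once this bound is established, the remainder of the proof reduces to a routine polar-coordinate computation coupled with a single application of the maximal inequality, and nothing else in the argument depends on the particular shape of $K$.
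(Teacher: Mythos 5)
Your overall architecture (reduce to the Euclidean norm via \eqref{*}, get a pointwise bound of the form $|R^m f(x,y)|\le C|x-y|^m\cdot(\text{maximal function})$, then integrate the level set and invoke the $L^p$ maximal inequality, which is where $p>1$ enters) is sound, but the step you yourself flag as decisive is stated in a form that is actually false for $m\ge 2$. The inequality
\[
|R^m f(x,y)|\leq C\,|y-x|^m\bigl(\mathcal{M}|\nabla^m f|(x)+\mathcal{M}|\nabla^m f|(y)\bigr)
\]
cannot hold in general: take $m=2$, $N\ge 3$, $c=\tfrac{x+y}{2}$, and $f(w)=\epsilon^2\phi((w-c)/\epsilon)$ with $\phi$ a fixed bump, $\phi(0)=1$. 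Then $|R^2f(x,y)|=2\epsilon^2$ while $\mathcal{M}|\nabla^2 f|(x)+\mathcal{M}|\nabla^2 f|(y)\lesssim (\epsilon/|x-y|)^N$, so the right-hand side is $O(\epsilon^N)\ll\epsilon^2$. The Bojarski--Haj\l asz estimate you invoke controls $f(z_j)-T_x^{m-1}f(z_j)$ by $|z_j-x|^m\bigl(\mathcal{M}|\nabla^m f|(x)+\mathcal{M}|\nabla^m f|(z_j)\bigr)$, so the honest conclusion is $|R^m f(x,y)|\le C|x-y|^m\sum_{j=0}^m\mathcal{M}|\nabla^m f|(z_j)$, with the maximal function evaluated at \emph{all} the nodes $z_j=\tfrac{m-j}{m}x+\tfrac{j}{m}y$. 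This is repairable: splitting according to which $j$ realizes the maximum and substituting $z=z_j$ (a linear change of variables in $y$ for fixed $x$, with Jacobian $(m/j)^N$ and $|x-y|=\tfrac{m}{j}|x-z|$) reduces each piece to exactly your polar-coordinate computation with $\mathcal{M}|\nabla^m f|(z)$, so the lemma still follows; but as written the proposal rests on a false pointwise inequality, and it also outsources the hardest analytic input (the higher-order Haj\l asz/Riesz-potential estimate) to a citation.

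It is worth contrasting this with the paper's proof, which avoids the issue entirely and is considerably lighter. The paper passes to polar coordinates \emph{first}, so that the direction $\sigma$ is frozen; then \eqref{meanva1} and the Irwin--Hall bound give $|\Delta_{\frac{t}{m}\sigma}^m f(x)|\lesssim t^m\,\mathbb{M}_\sigma(\partial_\sigma^m f)(x)$, where $\mathbb{M}_\sigma$ is the \emph{one-dimensional} maximal function along the fixed direction $\sigma$. The obstruction you describe (directional averages only dominating the strong maximal function) is an obstruction only if one insists on a single maximal operator valid for all directions simultaneously; for a fixed $\sigma$, Fubini along lines parallel to $\sigma$ plus the one-dimensional Hardy--Littlewood inequality gives $\|\mathbb{M}_\sigma g\|_{L^p(\mathbb{R}^N)}\lesssim\|g\|_{L^p(\mathbb{R}^N)}$ with a constant independent of $\sigma$, which is all the proof needs since the resulting bound \eqref{ssx} is uniform in $\sigma$ and is then simply integrated over $\mathbb{S}^{N-1}$. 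You should either adopt that order of operations or correct your pointwise estimate to include the intermediate nodes and carry out the extra changes of variables.
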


\begin{proof}
Using polar coordinates ($y=x+t\sigma$, $\sigma=\frac{y-x}{|y-x|}$ and
$t=|x-y|$) we write
\[
\underset{ \left\vert R^{m}f\left(  x,y\right)  \right\vert >\delta}{%
{\displaystyle\int\limits_{\mathbb{R}^{N}}}
{\displaystyle\int\limits_{\mathbb{R}^{N}}}
} \frac{\delta^{p}}{\Vert x-y\Vert_{K}^{N+mp}}dxdy=\int_{\mathbb{S}^{N-1}%
}\underset{|\Delta_{\frac{t}{m}\sigma}^{m}f(x)|>\delta}{%
{\displaystyle\int\limits_{\mathbb{R}^{N}}}
{\displaystyle\int\limits_{\mathbb{R}^{+}}}
}\frac{\delta^{p}}{\Vert\sigma\Vert_{K}^{N+mp}t^{1+mp}}dtdxd\sigma
\]
Thus, since $A\leq\Vert\sigma\Vert_{K}\leq B$, it is enough to show that there
exists a constant $C=C(m,N,p)>0$ such that for every $\sigma\in\mathbb{S}%
^{N-1}$
\begin{align}\label{ssx}
\underset{|\Delta_{\frac{t}{m}\sigma}^{m}f(x)|>\delta}{%
{\displaystyle\int\limits_{\mathbb{R}^{N}}}
{\displaystyle\int\limits_{\mathbb{R}^{+}}}
}\frac{\delta^{p}}{t^{1+mp}}dtdx\leq C\int_{\mathbb{R}^{N}}|\nabla
^{m}f(x)|^{p}dx.
\end{align}
We assume, without loss of generality, that $\sigma=e_{N}=(0,\ldots,0,1)$\footnote{Fix $\sigma \in \mathbb{S}^{N-1}$ and let $B\in SO(N)$ such that $B\sigma=e_N$. Then, by a change of variables and \eqref{ssx}
\begin{align*}
\int_{\mathbb{S}^{N-1}%
}\underset{|\Delta_{\frac{t}{m}\sigma}^{m}f(x)|>\delta}{%
{\displaystyle\int\limits_{\mathbb{R}^{N}}}
{\displaystyle\int\limits_{\mathbb{R}^{+}}}
}\frac{\delta^{p}}{t^{1+mp}}dtdxd\sigma&=\int_{\mathbb{S}^{N-1}%
}\underset{|\Delta_{\frac{t}{m}e_N}^{m}f(x)|>\delta}{%
{\displaystyle\int\limits_{\mathbb{R}^{N}}}
{\displaystyle\int\limits_{\mathbb{R}^{+}}}
}\frac{\delta^{p}}{t^{1+mp}}dtdxd\sigma\\
&\leq C|\mathbb{S}^{N-1}|\int_{\mathbb{R}^{N}}|\nabla^{m}f(x)|^{p}dx.
\end{align*}}. By
(\ref{meanva1}), we have for any $x=(x^{\prime},x_{N})\in\mathbb{R}%
^{N-1}\times\mathbb{R}$
\begin{align*}
|\Delta_{\frac{t}{m}e_{N}}^{m}f(x)|  &  \lesssim t^{m}\left\vert
\int\limits_{{{[0,1]}^{m}}}{\partial_{x_{N}}^{m}}f(x^{\prime},x_{N}+\frac
{t}{m}\sum\limits_{j=1}^{m}{{s_{j}}})ds_{1}...ds_{m}\right\vert \\
&  \lesssim t^{m-1}\int_{x_{N}}^{x_{N}+t}|\partial_{x_{N}}%
^{m}f(x^{\prime},s)|ds\\
&  \lesssim t^{m}\mathbb{M}_{N}\left(  \partial_{x_{N}}^{m}f\right)
(x^{\prime},x_{N})
\end{align*}
where $\mathbb{M}_{N}(f)$ denotes the maximal function of $f$ in direction
$x_{N}$, namely
\[
\mathbb{M}_{N}(f)(x)=\mathbb{M}_{N}(f)(x^{\prime},x_{N})=\sup_{t>0}\frac{1}%
{t}\int_{x_{N}}^{x_{N}+t}|f(x^{\prime},s)|ds.
\]
So, there exists $C=C(N,m,p)>0$ such that 
\begin{align*}
\underset{|\Delta_{\frac{t}{m}\sigma}^{m}f(x)|>\delta}{%
{\displaystyle\int\limits_{\mathbb{R}^{N}}}
{\displaystyle\int\limits_{\mathbb{R}^{+}}}
}\frac{\delta^{p}}{t^{1+mp}}dtdx  &  \leq\int_{\mathbb{R}^{N}}\int_{0}%
^{\infty}1_{C t^{m}\mathbb{M}_{N}\left(  \partial_{x_{N}}%
^{m}f\right)  (x)>\delta}\frac{\delta^{p}}{t^{1+mp}}dtdx\\
&  \lesssim\int_{\mathbb{R}^{N}}\mathbb{M}_{N}\left(
\partial_{x_{N}}^{m}f\right)  (x)^{p}dx\\
&  \lesssim\int_{\mathbb{R}^{N}}|\partial_{x_{N}}^{m}%
f(x)|^{p}dx\\
&  \lesssim \Vert\nabla^{m}f\Vert_{L^{p}(\mathbb{R}^{N})}^p%
\end{align*}
where in the line before the last we
used (see \cite{Stein})
\[
\int_{\mathbb{R}^{N-1}}\int_{\mathbb{R}}\left\vert \mathbb{M}_{N}\left(
\partial_{x_{N}}^{m}f\right)  (x^{\prime},x_{N})\right\vert ^{p}%
dx_{N}dx^{\prime}\lesssim\int_{\mathbb{R}^{N-1}}\int_{\mathbb{R}}\left\vert
\partial_{x_{N}}^{m}f(x^{\prime},x_{N})\right\vert ^{p}dx_{N}dx^{\prime}.
\]
This gives the conclusion.
\end{proof}
We are now in position to prove Theorem \ref{conb}.\\

\begin{proof}[\textbf{Proof of Theorem \ref{conb}:}] Notice that \eqref{54} can be re-written as:
\[
\underset{\delta\rightarrow0}{\lim}\underset{\left\vert R^mf\left(
x,y\right)  \right\vert >\delta}{%
{\displaystyle\int\limits_{\mathbb{R}^{N}}}
{\displaystyle\int\limits_{\mathbb{R}^{N}}}
}\frac{\delta^{p}}{\left\Vert x-y\right\Vert _{K}^{N+mp}}dxdy=
{\displaystyle\int\limits_{\mathbb{R}^{N}}}
\|\nabla^m f\|_{Z_{p,m}^*K}^p\, dx.
\]
By changing variables (writing $y=x+\sqrt[m]{\delta}h\sigma$, $\sigma
=\frac{y-x}{\left\vert y-x\right\vert }$, $h=\frac{1}{\sqrt[m]{\delta}%
}\left\vert y-x\right\vert $), we obtain
\[
\underset{|R^{m}f(x,y)|>\delta}{%
{\displaystyle\int\limits_{\mathbb{R}^{N}}}
{\displaystyle\int\limits_{\mathbb{R}^{N}}}
}
\frac
{\delta^{p}}{\Vert x-y\Vert^{N+mp}_{K}}dxdy=
\underset{\left\vert \frac{\Delta
_{\frac{\sqrt[m]{\delta}h\sigma}{m}}^{m}f(x)}{h^{m}\delta}\right\vert h^{m}%
>1}{
{\displaystyle\int\limits_{\mathbb{S}^{N-1}}}
{\displaystyle\int\limits_{\mathbb{R}^{N}}}
}
{\displaystyle\int_0^\infty}
\frac{1}{\Vert\sigma\Vert^{N+mp}_{K}h^{1+mp}}dhdxd\sigma
\]
By Lemma \ref{pols} there exists $C=C(m,N,p)>0$
such that for every $\sigma\in\mathbb{S}^{N-1}$,
\begin{equation}
\underset{\left\vert \frac{\Delta
_{\frac{\sqrt[m]{\delta}h\sigma}{m}}^{m}f(x)}{h^{m}\delta}\right\vert h^{m}%
>1}{
{\displaystyle\int\limits_{\mathbb{R}^{N}}}
{\displaystyle\int\limits_0^{\infty}}}
\frac{1}{h^{mp+1}}dhdx\leq C\int_{\mathbb{R}^{N}}|\nabla^{m}f(x)|^{p}dx.
\label{1}%
\end{equation}
Moreover, for every $\sigma\in\mathbb{S}^{N-1}$ it holds
\begin{equation}
\lim_{\delta\rightarrow0}\underset{\left\vert \frac{\Delta
_{\frac{\sqrt[m]{\delta}h\sigma}{m}}^{m}f(x)}{h^{m}\delta}\right\vert h^{m}%
>1}{
{\displaystyle\int\limits_{\mathbb{R}^{N}}}
{\displaystyle\int\limits_0^{\infty}}}\frac{1}{h^{mp+1}}dhdx=\frac{1}{m^{mp+1} p}%
{\displaystyle\int\limits_{\mathbb{R}^{N}}}
|D^{m}f(x)(\sigma,...,\sigma)|^{p}dx\label{23}.%
\end{equation}
To prove \eqref{23}, we define $F_{\delta
}:\mathbb{S}^{N-1}\rightarrow\mathbb{R}$ by
\[
F_{\delta}(\sigma):=\frac{1}{\left\Vert \sigma\right\Vert _{K}^{N+mp}}%
\underset{\left\vert \frac{\Delta_{\frac{\sqrt[m]{\delta}}{m}h\sigma}^{m}%
f(x)}{h^{m}\delta}\right\vert h^{m}>1}{\int_{\mathbb{R}^{N}}\int_{0}^{\infty}%
}\frac{1}{h^{mp+1}}dhdx.
\]
By Lemma \ref{pols} there exists $C=C(N,p,m)>0$ such that for all $\sigma
\in\mathbb{S}^{N-1}$ and for all $\delta>0$ :
\begin{equation}
F_{\delta}(\sigma)\leq\frac{C}{A^{N+mp}}%
{\displaystyle\int\limits_{\mathbb{R}^{N}}}
|\nabla^{m}f\left(  x\right)  |^{p}dx. \label{stima3}%
\end{equation}
Without loss of generality, we suppose that $\sigma=e_{N}$.
Given $x^{\prime}\in\mathbb{R}^{N-1}$ and
$\delta\in(0,1)$ we define
\[
\mathcal{A}(x^{\prime},\delta):=\left\{  (x_{N},h)\in\mathbb{R}\times(0,\infty
)\ |\ \left\vert \frac{\Delta_{\frac{\sqrt[m]{\delta}}{m}he_{N}}%
^{m}f(x^{\prime},x_{N})}{h^{m}\delta}\right\vert h^{m}>1\right\} ,
\]%
and
\[
\mathcal{A}(x^{\prime}):=\left\{  (x_{N},h)\in\mathbb{R}\times(0,\infty)\ |\ \left\vert
\partial_{x_{N}}^{m}f(x^{\prime},x_{N})\right\vert h^{m}>m^{m}\right\}.
\]
 Using
\eqref{meanva1} it is easy to see that
\[
\lim_{\delta\to0}\frac{1}{h^{mp+1}} 1_{\mathcal{A}(x^{\prime},\delta)}(x_{N},h)= \frac{1}{h^{mp+1}}1_{\mathcal{A}(x^{\prime})}%
(x_{N},h)\quad\mbox{a.e.}\ (x',x_{N},h)\in\mathbb{R}^{N-1}\times\mathbb{R}\times[0,\infty),
\]
and
\[
\frac{1}{h^{mp+1}} 1_{\mathcal{A}(x^{\prime},\delta)}(x_{N},h)\leq\frac{1}{h^{mp+1}}1_{\{h^{m}\mathbb{M}_{N}(\partial_{x_{N}%
}^{m}f)(x)>m^{m}\}}(x,h)\in\mathit{L}^{1}(\mathbb{R}^{N}\times\lbrack
0,\infty)).
\]
By the Lebesgue dominated convergence theorem, we get \eqref{23}.
Using \eqref{stima3} and the Lebesgue dominated convergence
theorem again, we can conclude that
\begin{align*}
\underset{\delta\rightarrow0}{\lim}\underset{\left\vert R^{m}f\left(
x,y\right)  \right\vert >\delta}{\int_{\mathbb{R}^{N}}\int_{\mathbb{R}^{N}}%
}\frac{\delta^{p}}{\left\Vert x-y\right\Vert _{K}^{N+mp}}dxdy  &
=\int_{\mathbb{S}^{N-1}}\frac{1}{m^{mp+1}p}\frac{1}{\left\Vert \sigma
\right\Vert _{K}^{N+mp}}\int_{\mathbb{R}^{N}}\left\vert D^{m}f(x)(\sigma
,...,\sigma)\right\vert ^{p}dxd\sigma\\
&  =\frac{1}{m^{mp+1}p}\int_{\mathbb{R}^{N}}\int_{\mathbb{S}^{N-1}}\frac
{1}{\left\Vert \sigma\right\Vert _{K}^{N+mp}}\left\vert D^{m}f(x)(\sigma
,...,\sigma)\right\vert ^{p}d\sigma dx.
\end{align*}
Now, notice that
\begin{align}
&\int_{\mathbb{S}^{N-1}}\frac{1}{\left\Vert \sigma\right\Vert _{K}^{N+mp}%
}\left\vert D^{m}f(x)(\sigma,...,\sigma)\right\vert ^{p}d\sigma \nonumber
=(N+mp)\int_{\mathbb{S}^{N-1}}%
{\displaystyle\int\limits_{0}^{\frac{1}{\left\Vert \sigma\right\Vert _{K}}}}
\left\vert D^{m}f(x)(\sigma,...,\sigma)\right\vert ^{p}r^{N+mp-1}%
drd\sigma\nonumber\\
&  =(N+mp)\int_{\mathbb{S}^{N-1}}%
{\displaystyle\int\limits_{0}^{\frac{1}{\left\Vert \sigma\right\Vert _{K}}}}
\left\vert D^{m}f(x)(r\sigma,...,r\sigma)\right\vert ^{p}r^{N-1}%
drd\sigma\nonumber\\
&  =(N+mp)\int_{K}\left\vert D^{m}f(x)(y,...,y)\right\vert ^{p}dy\label{id}
\end{align}
where in the last equality we used the fact that $K=\{y\in \mathbb{R}^{N}\ |\ \|y\|_K\leq 1\}$ and the conclusion follows.
\end{proof}

\begin{remark}
We explicitly note that Theorem \ref{conb} generalizes some already known
results: for example taking $m=1$ and $K=\{x\in\mathbb{R}^{n}\ |\ |x|\leq1\}$
we get \cite[Lemma 3]{nguyen06} and taking $m=2$ and $K=\{x\in\mathbb{R}%
^{n}\ |\ |x|\leq1\}$ we get \cite[Theorem 1.1]{Cui2}. Moreover, Theorem 
\ref{conb} generalizes \cite[Theorem 1.1]{NguSq} in the case where the magnetic field $A$ is zero and
$m\geq1$.
\end{remark}

\subsection{BBM formula}

In this Section we prove Theorem \ref{Bani}.\\
Let $\rho$ be a positive real function satisfying \eqref{propq}
The following result is proved in \cite[Lemma 8]{B}

\begin{lemma}\label{Sts}
Let $f\in W^{m,p}(\mathbb{R}^{N})$ with $m\geq2$ and $1\leq p<\infty$ and let
$\rho\in L^{1}(\mathbb{R})$. Then
\begin{equation*}
\int_{\mathbb{R}^{N}}\int_{\mathbb{R}^{N}} \left|  \Delta^{m}_{h} f(x)\right|
^{p} |h|^{-mp} \rho(|h|)\ dxdh\leq\frac{\|\rho\|_{L^{1}(\mathbb{R})}%
}{|\mathbb{S}^{N-1}|}\int_{\mathbb{R}^{N}}\left(  \int_{\mathbb{S}^{N-1}}
|D^{m}f(x)(\sigma,\cdots, \sigma)|^{p} d\sigma\right)  dx.
\end{equation*}

\end{lemma}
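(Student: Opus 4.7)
Since the conclusion is attributed to Borghol, I outline the natural route. The strategy has three ingredients: polar coordinates on the $h$-variable, the integral representation \eqref{meanva1} of $\Delta_{h}^{m}f$, and Jensen's inequality combined with translation invariance in $x$.

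First I would pass to polar coordinates $h=t\sigma$, $t=|h|>0$, $\sigma\in\mathbb{S}^{N-1}$, so that $|h|^{-mp}\rho(|h|)\,dh = t^{N-1-mp}\rho(t)\,dt\,d\sigma$, and the left-hand side becomes
\begin{equation*}
\int_{\mathbb{S}^{N-1}}\int_{0}^{\infty}\int_{\mathbb{R}^{N}}|\Delta_{t\sigma}^{m}f(x)|^{p}\,t^{N-1-mp}\rho(t)\,dx\,dt\,d\sigma.
\end{equation*}
Next, by \eqref{meanva1} applied with $h=t\sigma$ and the $m$-linearity of $D^{m}f(x)$,
\begin{equation*}
\Delta_{t\sigma}^{m}f(x)=t^{m}\int_{[0,1]^{m}}D^{m}f\Bigl(x+t\sigma\sum_{j=1}^{m}s_{j}\Bigr)(\sigma,\ldots,\sigma)\,ds_{1}\cdots ds_{m}.
\end{equation*}
Since $[0,1]^{m}$ has unit Lebesgue measure, Jensen's inequality gives
\begin{equation*}
|\Delta_{t\sigma}^{m}f(x)|^{p}\leq t^{mp}\int_{[0,1]^{m}}\Bigl|D^{m}f\Bigl(x+t\sigma\sum_{j=1}^{m}s_{j}\Bigr)(\sigma,\ldots,\sigma)\Bigr|^{p}ds,
\end{equation*}
and the factor $t^{mp}$ cancels the $t^{-mp}$ already present, leaving $t^{N-1}\rho(t)$.

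Finally I would swap the order of integration (Fubini) and, for each fixed $t,\sigma,s$, use translation invariance of Lebesgue measure in the $x$-integral to eliminate the shift $t\sigma\sum s_{j}$, obtaining $\int_{\mathbb{R}^{N}}|D^{m}f(x)(\sigma,\ldots,\sigma)|^{p}\,dx$. The trivial $ds$-integration contributes a factor $1$, and what remains factors cleanly into
\begin{equation*}
\Bigl(\int_{0}^{\infty}\rho(t)\,t^{N-1}\,dt\Bigr)\int_{\mathbb{S}^{N-1}}\int_{\mathbb{R}^{N}}|D^{m}f(x)(\sigma,\ldots,\sigma)|^{p}\,dx\,d\sigma,
\end{equation*}
which matches the claimed bound once one identifies $\int_{0}^{\infty}\rho(t)\,t^{N-1}\,dt$ with $\|\rho\|_{L^{1}(\mathbb{R})}/|\mathbb{S}^{N-1}|$ in the normalization used in the statement (since $\int_{\mathbb{R}^{N}}\rho(|h|)\,dh=|\mathbb{S}^{N-1}|\int_{0}^{\infty}\rho(t)\,t^{N-1}\,dt$).

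The main obstacle is the regularity issue: \eqref{meanva1} is a pointwise identity that is immediate for $f\in C^{m}$, but for a general $f\in W^{m,p}(\mathbb{R}^{N})$ one must justify it a.e. This can be handled by approximating $f$ by $f_{n}\in C^{\infty}_{c}(\mathbb{R}^{N})$ in $W^{m,p}$, applying the inequality to each $f_{n}$, and passing to the limit: the right-hand side is continuous in the $W^{m,p}$-norm, while on the left one uses Fatou's lemma together with the $L^{p}$-continuity of $f\mapsto \Delta_{h}^{m}f$ (uniformly in $h$) to pass to the limit. This density argument is the only step that is not purely algebraic.
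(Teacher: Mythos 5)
Your argument is correct and is the natural route; note, however, that the paper itself does not prove this lemma but cites \cite[Lemma 8]{B} for it, so there is no in-paper proof to compare against. The chain polar coordinates $\Rightarrow$ representation \eqref{meanva1} $\Rightarrow$ Jensen on the probability measure $ds_1\cdots ds_m$ on $[0,1]^m$ $\Rightarrow$ Tonelli and translation invariance in $x$ is exactly the standard derivation, and it yields the bound
\[
\Bigl(\int_{0}^{\infty}t^{N-1}\rho(t)\,dt\Bigr)\int_{\mathbb{S}^{N-1}}\int_{\mathbb{R}^{N}}\bigl|D^{m}f(x)(\sigma,\ldots,\sigma)\bigr|^{p}\,dx\,d\sigma ,
\]
with equality replaced by $\leq$ only at the Jensen step (and even that is an equality when $p=1$).

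Two small remarks. First, you are right to flag that the constant $\|\rho\|_{L^{1}(\mathbb{R})}/|\mathbb{S}^{N-1}|$ in the statement only agrees with $\int_{0}^{\infty}t^{N-1}\rho(t)\,dt$ under the reading that $\|\rho\|_{L^{1}(\mathbb{R})}$ abbreviates the $L^{1}(\mathbb{R}^{N})$-norm of the radial extension $h\mapsto\rho(|h|)$; taken literally as $\int_{\mathbb{R}}|\rho|$, the stated constant is not generally comparable to the one your (correct) computation produces. This appears to be a notational infelicity inherited from Borghol rather than a flaw in your reasoning, and it is harmless in the way the paper applies the lemma, since there $\int_{0}^{\infty}t^{N-1}\rho_{\varepsilon}(t)\,dt=1$. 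Second, on the regularity issue: the density-plus-Fatou scheme you sketch works (pass to a subsequence to get a.e.\ convergence of $\Delta_{h}^{m}f_{n}(x)$ in $(x,h)$, and a.e.\ convergence of $\nabla^{m}f_{n}$ for the right-hand side), but it is slightly simpler to observe that \eqref{meanva1} already holds a.e.\ for $f\in W^{m,p}$ by the usual absolute-continuity-on-lines argument, so the inequality can be proved directly for general $f$ without approximation.
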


The following result is the analogous of \cite[Lemma 9]{B} in our setting.

\begin{lemma}\label{Bani1}
Fix $m\in \mathbb{N}$ and $1<p<\infty$. If $f \in C^{m+1}_{c}(\mathbb{R}^{N})$ then
\begin{equation}
\lim_{\varepsilon\to0} \int_{\mathbb{R}^{N}}\int_{\mathbb{R}^{N}} \frac{|R^{m}
f(x,y)|^{p}}{\|x-y\|^{mp}_{K}} \rho_{\varepsilon}(\|x-y\|_{K})\ dxdy =
\frac{(N+mp)}{m^{mp}}\int_{\mathbb{R}^{N}}\int_{K}|D^{m} f(x)(y,\cdots
,y)|^{p} dy dx.
\end{equation}

\end{lemma}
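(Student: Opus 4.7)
The plan is to pass to polar coordinates to rewrite the double integral as a one-dimensional integral against $\rho_\varepsilon$, then exploit the concentration of $\rho_\varepsilon$ at the origin together with the Taylor expansion of $\Delta^m_h f$, and finally convert the resulting spherical integral to an integral over $K$ via the identity already established in the proof of Theorem \ref{conb}. For the first step, I would write $y=x+r\sigma$ with $\sigma\in\mathbb{S}^{N-1}$ (so that $\|y-x\|_K=r\|\sigma\|_K$ and $R^m f(x,x+r\sigma)=(-1)^m\Delta^m_{r\sigma/m}f(x)$) and then substitute $s=r\|\sigma\|_K$ in the radial variable. A direct computation transforms the left-hand side into
\begin{equation*}
\int_0^\infty \rho_\varepsilon(s)\,s^{N-1}\,H(s)\,ds,\qquad H(s):=\int_{\mathbb{R}^N}\int_{\mathbb{S}^{N-1}}\frac{|\Delta^m_{s\sigma/(m\|\sigma\|_K)}f(x)|^p}{s^{mp}\,\|\sigma\|_K^N}\,d\sigma\,dx.
\end{equation*}

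For the second step I would compute $\lim_{s\to 0^+}H(s)$. Since $f\in C^{m+1}_c(\mathbb{R}^N)$, the mean-value representation \eqref{meanva1} combined with the boundedness of $D^{m+1}f$ yields $\Delta^m_h f(x)=D^m f(x)(h,\ldots,h)+O(|h|^{m+1})$ uniformly in $x$. Substituting $h=s\sigma/(m\|\sigma\|_K)$ and using the $m$-linearity of $D^m f(x)$ gives
\begin{equation*}
\lim_{s\to 0^+}H(s)=\frac{1}{m^{mp}}\int_{\mathbb{R}^N}\int_{\mathbb{S}^{N-1}}\frac{|D^m f(x)(\sigma,\ldots,\sigma)|^p}{\|\sigma\|_K^{N+mp}}\,d\sigma\,dx.
\end{equation*}

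Finally, conditions \eqref{propq} say exactly that $\rho_\varepsilon(s)s^{N-1}\,ds$ is a probability measure on $(0,\infty)$ concentrating at the origin; together with the continuity of $H$ at $0^+$ and a uniform bound $\sup_{s>0}H(s)<\infty$, a standard splitting $\int_0^\delta+\int_\delta^\infty$ forces $\int_0^\infty H(s)\rho_\varepsilon(s)s^{N-1}\,ds\to H(0^+)$ as $\varepsilon\to 0$. Plugging $H(0^+)$ into the spherical-to-$K$ identity \eqref{id} then replaces the inner spherical integral by $(N+mp)\int_K|D^m f(x)(y,\ldots,y)|^p\,dy$, producing the stated constant $\frac{N+mp}{m^{mp}}$. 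I expect the main technical hurdle to be the uniform bound on $H$ needed to kill the tail on $[\delta,\infty)$: for this the compact support of $f$ is crucial, since $\Delta^m_h f$ is supported in $\bigcup_{j=0}^m(\operatorname{supp} f-jh)$, a set of Lebesgue measure bounded by $(m+1)|\operatorname{supp} f|$ independently of $h$, while $\|\Delta^m_h f\|_\infty\leq 2^m\|f\|_\infty$; combined with the Taylor bound $|\Delta^m_h f(x)|\lesssim |h|^m\|D^m f\|_\infty$ valid for all $h$, this yields $H(s)\lesssim 1$ for $s$ bounded and $H(s)\lesssim s^{-mp}$ for $s$ large, which comfortably suffices.
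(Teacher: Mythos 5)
Your argument is correct and reaches the right constant, but it is organized rather differently from the paper's. After the substitution $y=x+h$, $h=r\sigma$, $s=r\|\sigma\|_K$, you isolate a one-variable profile
\[
H(s)=\int_{\mathbb{R}^N}\int_{\mathbb{S}^{N-1}}\frac{|\Delta^m_{s\sigma/(m\|\sigma\|_K)}f(x)|^p}{s^{mp}\,\|\sigma\|_K^N}\,d\sigma\,dx,
\]
prove $H$ is bounded on $(0,\infty)$ and continuous at $0^+$ (using \eqref{meanva1} and the Lipschitz continuity of $D^m f$ coming from $f\in C^{m+1}_c$), and then invoke the fact that \eqref{propq} says $\rho_\varepsilon(s)s^{N-1}ds$ is a probability measure concentrating at the origin to conclude $\int_0^\infty H\,\rho_\varepsilon(s)s^{N-1}ds\to H(0^+)$, finishing with the sphere-to-$K$ identity \eqref{id}. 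The paper instead stays with the double integral: it uses the uniform continuity of $t\mapsto |t|^p$ on $[0,(m+1)S]$ to produce the pointwise bound \eqref{picchio} comparing $m^{mp}|R^mf(x,y)|^p\|y-x\|_K^{-mp}$ with $|D^mf(x)(\tfrac{y-x}{\|y-x\|_K},\dots)|^p$ up to an error $CA^{-m-1}S\|y-x\|_K+\delta$, splits into $|y-x|\le 1$ and $|y-x|>1$, and runs a $\limsup$/$\liminf$ bracketing argument where each error piece is shown to vanish. Both proofs rely on the same two pillars (the Taylor estimate for $\Delta^m_h f$ and the concentration of $\rho_\varepsilon$), but your reduction to a scalar concentration problem at $s=0^+$ is cleaner and avoids the two-sided inequality gymnastics, at the modest price of having to prove a global $\sup_s H(s)<\infty$ bound, which you handle correctly via the crude $\|\Delta^m_hf\|_\infty\le 2^m\|f\|_\infty$ estimate together with the bounded-measure-of-support observation.
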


\begin{proof}
Let $S=\|f\|_{W^{m+1,\infty}}$. Since $t\to|t|^{p}$ is uniformly continuous in
$[0, (m+1)S]$ then for any $\delta>0$ there exists $C=C(\delta)>0$ such that
\begin{equation}
\label{daw}||s|^{p}-|t|^{p}|\leq C|s-t|+\delta\qquad\forall s,t\in[0, (m+1)S].
\end{equation}
Using \eqref{daw}, \eqref{*} and proceeding as in \cite[Lemma 9]{B} we get
\begin{align}
\label{daw2} &  \left|  \left|  \Delta^{m}_{h} f(x)\right|  ^{p}
\|h\|^{-mp}_{K}-\left|  D^{m} f(x)\left(  \frac{h}{\|h\|_{K}},\ldots, \frac
{h}{\|h\|_{K}}\right)  \right|  ^{p}\right| \\
&  \leq C \|h\|^{-m}_{K} \left|  \Delta^{m}_{h} f(x)-D^{m} f(x)(h,\ldots,
h)\right|  +\delta\nonumber\\
&  \leq C A^{-m-1} S\|h\|_{K}+\delta,\nonumber
\end{align}
for every $x\in\mathbb{R}^{N}$ and for all $h\in\mathbb{R}^{N}\setminus\{0\}$. In particular, when $h=\frac{y-x}{m}$, we get%
\begin{align}\label{picchio}
&  \left\vert m^{mp}\left\vert R^{m}f\left(  x,y\right)  \right\vert
^{p}\left\Vert y-x\right\Vert _{K}^{-mp}-\left\vert D^{m}f\left(  x\right)
\left(  \frac{y-x}{\left\Vert y-x\right\Vert _{K}},...,\frac{y-x}{\left\Vert
y-x\right\Vert _{K}}\right)  \right\vert ^{p}\right\vert \\
\nonumber
&  \leq C A^{-m-1} S\left\Vert y-x\right\Vert _{K}+\delta.
\end{align}
Let $
\mathcal{A}\left(  x,y\right)  =\left\{  \left(  x,y\right)  :\text{at least one of
}\frac{m-j}{m}x+\frac{j}{m}y, j=0,...,m,\text{ is in supp}\left(  f\right)  \right\}
$. Then we note that
\[
\int_{\mathbb{R}^{N}}\int_{\mathbb{R}^{N}}
\frac{\left\vert R^{m}f\left(  x,y\right)  \right\vert ^{p}}{\left\Vert
y-x\right\Vert _{K}^{mp}}\rho_{\varepsilon}\left(  \left\Vert y-x\right\Vert
_{K}\right)  dxdy=\underset{\mathcal{A}\left(  x,y\right)}{\int_{\mathbb{R}^{N}}\int_{\mathbb{R}^{N}}}
\frac{\left\vert R^{m}f\left(  x,y\right)  \right\vert ^{p}}{\left\Vert
y-x\right\Vert _{K}^{mp}}\rho_{\varepsilon}\left(  \left\Vert y-x\right\Vert
_{K}\right)  dxdy.
\]
Using \eqref{picchio}, we get%
\begin{align*}
&
{\displaystyle\iint\limits_{A\left(  x,y\right)  \cap\left\{  \left\vert
y-x\right\vert \leq1\right\}  }}
\frac{m^{mp}\left\vert R^{m}f\left(  x,y\right)  \right\vert ^{p}}{\left\Vert
y-x\right\Vert _{K}^{mp}}\rho_{\varepsilon}\left(  \left\Vert y-x\right\Vert
_{K}\right)  dxdy\\
&  \leq\int_{\mathbb{R}^{N}}\int_{\mathbb{R}^{N}}
\left\vert D^{m}f\left(  x\right)  \left(  \frac{y-x}{\left\Vert
y-x\right\Vert _{K}},...,\frac{y-x}{\left\Vert y-x\right\Vert _{K}}\right)
\right\vert ^{p}\rho_{\varepsilon}\left(  \left\Vert y-x\right\Vert
_{K}\right)  dxdy\\
&  +C A^{-m-1} S%
{\displaystyle\iint\limits_{\mathcal{A}\left(  x,y\right)  \cap\left\{  \left\vert
y-x\right\vert \leq1\right\}  }}
\left\Vert y-x\right\Vert _{K}\rho_{\varepsilon}\left(  \left\Vert
y-x\right\Vert _{K}\right)  +\delta%
{\displaystyle\iint\limits_{\mathcal{A}\left(  x,y\right)  \cap\left\{  \left\vert
y-x\right\vert \leq1\right\}  }}
\rho_{\varepsilon}\left(  \left\Vert y-x\right\Vert _{K}\right)  dxdy
\end{align*}
Note that
\begin{align*}%
{\displaystyle\iint\limits_{\mathcal{A}\left(  x,y\right)  \cap\left\{  \left\vert
y-x\right\vert \leq1\right\}  }}
\left\Vert y-x\right\Vert _{K}\rho_{\varepsilon}\left(  \left\Vert
y-x\right\Vert _{K}\right)   &  =%
{\displaystyle\int\limits_{\left\vert h\right\vert \leq1}}
\left\Vert h\right\Vert _{K}\rho_{\varepsilon}\left(  \left\Vert h\right\Vert
_{K}\right)
{\displaystyle\int\limits_{\mathcal{A}\left(  x,x+h\right)  }}
dxdh\\
&  \leq%
{\displaystyle\int\limits_{\left\vert h\right\vert \leq1}}
\left\Vert h\right\Vert _{K}\rho_{\varepsilon}\left(  \left\Vert h\right\Vert
_{K}\right)  \left(  m+1\right)  \left\vert \text{supp}\left(  f\right)
\right\vert dh\\
&=(m+1)\text{supp}\left(  f\right) \int\limits_{\mathbb{S}^{N-1}}\|\sigma\|_K^{-N}d\sigma\int\limits_{0}^1\rho_{\varepsilon}(s) s^N ds.
\end{align*}
By \eqref{propq} we automatically get that \cite[Remark 4.3]{PinSquVec} $
\lim_{\varepsilon\to 0}\int\limits_{0}^1\rho_{\varepsilon}(s) s^N ds=0$, 
thus
\[
\lim_{\varepsilon\to 0}{\displaystyle\iint\limits_{\mathcal{A}\left(  x,y\right)  \cap\left\{  \left\vert
y-x\right\vert \leq1\right\}  }}
\left\Vert y-x\right\Vert _{K}\rho_{\varepsilon}\left(  \left\Vert
y-x\right\Vert _{K}\right)=0.
\]
Similarly,
\[
\delta%
{\displaystyle\iint\limits_{\mathcal{A}\left(  x,y\right)  \cap\left\{  \left\vert
y-x\right\vert \leq1\right\}  }}
\rho_{\varepsilon}\left(  \left\Vert y-x\right\Vert _{K}\right)
dxdy\lesssim\delta
\]
On the other hand,
\begin{align*}
&
{\displaystyle\iint\limits_{\mathcal{A}\left(  x,y\right)  \cap\left\{  \left\vert
y-x\right\vert >1\right\}  }}
\frac{m^{mp}\left\vert R^{m}f\left(  x,y\right)  \right\vert ^{p}}{\left\Vert
y-x\right\Vert _{K}^{mp}}\rho_{\varepsilon}\left(  \left\Vert y-x\right\Vert
_{K}\right)  dxdy\\
&  \lesssim\left\Vert f\right\Vert _{p}%
{\displaystyle\int\limits_{\left\vert h\right\vert >1}}
\rho_{\varepsilon}\left(  \left\Vert h\right\Vert \right)  dh\rightarrow0 \text{ as } \varepsilon \rightarrow0.
\end{align*}
Hence, by sending $\varepsilon \rightarrow0$ and then $\delta \rightarrow0$, we can now conclude that
\begin{align*}
&  \limsup_{\varepsilon\to0}
\int_{\mathbb{R}^{N}}\int_{\mathbb{R}^{N}}
\frac{\left\vert R^{m}f\left(  x,y\right)  \right\vert ^{p}}{\left\Vert
y-x\right\Vert _{K}^{mp}}\rho_{\varepsilon}\left(  \left\Vert y-x\right\Vert
_{K}\right)  dxdy\\
&  \leq\frac{1}{m^{mp}}\limsup_{\varepsilon\to0}
\int_{\mathbb{R}^{N}}\int_{\mathbb{R}^{N}}
\left\vert D^{m}f\left(  x\right)  \left(  \frac{y-x}{\left\Vert
y-x\right\Vert _{K}},...,\frac{y-x}{\left\Vert y-x\right\Vert _{K}}\right)
\right\vert ^{p}\rho_{\varepsilon}\left(  \left\Vert y-x\right\Vert
_{K}\right)  dxdy.
\end{align*}
We next compute the limit
of the quantity on the right-hand side. We have
\begin{align}\label{kill}
&  \int_{\mathbb{R}^{N}}\left|  D^{m} f(x)\left(  \frac{x-y}{\|x-y\|_K},\ldots,
\frac{x-y}{\|x-y\|_K}\right)  \right|  ^{p} \rho_{\varepsilon}%
(\|x-y\|_K)\ dy\\
&  =\int_{\mathbb{S}^{N-1}}\int_{0}^{\infty}\left|  D^{m} f(x)\left(
\sigma,\ldots, \sigma\right)  \right|  ^{p}\frac{1}{\Vert\sigma\Vert_{K}^{mp}}
\rho_{\varepsilon}(r\Vert\sigma\Vert_{K})r^{N-1}\ drd\sigma\nonumber\\
&  =\int_{\mathbb{S}^{N-1}}\frac{1}{\Vert\sigma\Vert_{K}^{N+mp}}\left|  D^{m}
f(x)\left(  \sigma,\ldots, \sigma\right)  \right|  ^{p}\ d\sigma\nonumber\\
&  =(N+mp)\int_{K}\left\vert D^{m}f(x)(y,...,y)\right\vert
^{p}dy,\nonumber
\end{align}
where the last equality follows from \eqref{id}. Thus, 
$$\limsup_{\varepsilon\to0}
\int_{\mathbb{R}^{N}}\int_{\mathbb{R}^{N}}
\frac{\left\vert R^{m}f\left(  x,y\right)  \right\vert ^{p}}{\left\Vert
y-x\right\Vert _{K}^{mp}}\rho_{\varepsilon}\left(  \left\Vert y-x\right\Vert
_{K}\right)  dxdy\\
\leq\frac{N+mp}{m^{mp}}\int_{\mathbb{R}^{N}}\int_{K}|D^{m} f(x)(y,\cdots,y)|^{p}
dy dx.
$$
Let $R>0$ be such that  supp$(f)\subset B_{R}$, then%
\begin{align*}
&  \underset{\left\vert y-x\right\vert \leq1}{%
{\displaystyle\iint}
}\left\vert D^{m}f\left(  x\right)  \left(  \frac{y-x}{\left\Vert
y-x\right\Vert _{K}},...,\frac{y-x}{\left\Vert y-x\right\Vert _{K}}\right)
\right\vert ^{p}\rho_{\varepsilon}\left(  \left\Vert y-x\right\Vert
_{K}\right)  dxdy\\
&  =\underset{\left\vert y-x\right\vert \leq1}{%
{\displaystyle\int\limits_{B_{R}}}
{\displaystyle\int\limits_{\mathbb{R}^{N}}}
}\left\vert D^{m}f\left(  x\right)  \left(  \frac{y-x}{\left\Vert
y-x\right\Vert _{K}},...,\frac{y-x}{\left\Vert y-x\right\Vert _{K}}\right)
\right\vert ^{p}\rho_{\varepsilon}\left(  \left\Vert y-x\right\Vert
_{K}\right)  dydx\\
&  \leq\int_{\mathbb{R}^{N}}\int_{\mathbb{R}^{N}}
\frac{m^{mp}\left\vert R^{m}f\left(  x,y\right)  \right\vert ^{p}}{\left\Vert
y-x\right\Vert _{K}^{mp}}\rho_{\varepsilon}\left(  \left\Vert y-x\right\Vert
_{K}\right)  dxdy\\
&  +\underset{\left\vert y-x\right\vert \leq1}{%
{\displaystyle\int\limits_{B_{R}}}
{\displaystyle\int\limits_{\mathbb{R}^{N}}}
}C_{\delta}S\left\Vert y-x\right\Vert _{K}\rho_{\varepsilon}\left(  \left\Vert
y-x\right\Vert _{K}\right)  dydx+\underset{\left\vert y-x\right\vert \leq1}{%
{\displaystyle\int\limits_{B_{R}}}
{\displaystyle\int\limits_{\mathbb{R}^{N}}}
}\rho_{\varepsilon}\left(  \left\Vert y-x\right\Vert _{K}\right)  dxdy
\end{align*}
As above
\begin{align*}
\underset{\left\vert y-x\right\vert \leq1}{%
{\displaystyle\int\limits_{B_{R}}}
{\displaystyle\int\limits_{\mathbb{R}^{N}}}
}\left\Vert y-x\right\Vert _{K}\rho_{\varepsilon}\left(  \left\Vert
y-x\right\Vert _{K}\right)  dydx &  \rightarrow0 \text{ as } \varepsilon \rightarrow0,\\
\underset{\left\vert y-x\right\vert \leq1}{%
{\displaystyle\int\limits_{B_{R}}}
{\displaystyle\int\limits_{\mathbb{R}^{N}}}
}\rho_{\varepsilon}\left(  \left\Vert y-x\right\Vert _{K}\right)  dxdy &
\lesssim\delta.
\end{align*}
By \eqref{kill}
\begin{align*}
  &\underset{\left\vert y-x\right\vert \leq1}{{\displaystyle\int\limits_{B_{R}}}
{\displaystyle\int\limits_{\mathbb{R}^{N}}}
}\left\vert D^{m}f\left(  x\right)  \left(  \frac{y-x}{\left\Vert
y-x\right\Vert _{K}},\cdots,\frac{y-x}{\left\Vert y-x\right\Vert _{K}}\right)
\right\vert ^{p}\rho_{\varepsilon}\left(  \left\Vert y-x\right\Vert_{K}\right)  dydx\\
&=(N+mp)\int_{\mathbb{R}^N}\int_K|D^mf(x)(y,\cdots,y)|^p dydx
\end{align*}
Thus,
$$
\frac{N+mp}{m^{mp}}\int_{\mathbb{R}^{N}}\int_{K}|D^{m} f(x)(y,\cdots,y)|^{p}
dy dx\\
 \leq \limsup_{\varepsilon\to0}
\int_{\mathbb{R}^{N}}\int_{\mathbb{R}^{N}}
\frac{\left\vert R^{m}f\left(  x,y\right)  \right\vert ^{p}}{\left\Vert
y-x\right\Vert _{K}^{mp}}\rho_{\varepsilon}\left(  \left\Vert y-x\right\Vert
_{K}\right)  dxdy,
$$
We conclude that
$$ \lim_{\varepsilon\to0}
\int_{\mathbb{R}^{N}}\int_{\mathbb{R}^{N}}
\frac{\left\vert R^{m}f\left(  x,y\right)  \right\vert ^{p}}{\left\Vert
y-x\right\Vert _{K}^{mp}}\rho_{\varepsilon}\left(  \left\Vert y-x\right\Vert
_{K}\right)  dxdy\\
=\frac{N+mp}{m^{mp}}\int_{\mathbb{R}^{N}}\int_{K}|D^{m} f(x)(y,\cdots
,y)|^{p} dy dx.
$$
\end{proof}

\begin{proof}[\textbf{Proof of Theorem \ref{Bani}}]
 So now we consider $f\in W^{m,p}(\mathbb{R}^{N})$ and let $f_{n}\in C_{c}^{\infty}(\mathbb{R}^{N})$ such that $f_{n}\to f$ in the $W^{m,p}(\mathbb{R}^{N})$ norm. Then one has
\begin{align}
&\Big|\Big(\int_{\mathbb{R}^{N}}\int_{\mathbb{R}^{N}}\left|  R^{m} f(x,y)\right|  ^{p}
\|x-y\|^{-mp}_{K}\rho_{\varepsilon}(\|x-y\|_{K})\ dydx\Big)^{\frac{1}{p}}\notag\\
&-\Big(\int_{\mathbb{R}^{N}}\int_{\mathbb{R}^{N}}\left|  R^{m} f_{n}(x,y)\right|  ^{p}
\|x-y\|^{-mp}_{K}\rho_{\varepsilon}(\|x-y\|_{K})\ dydx\Big)^{\frac{1}{p}}\Big|\notag\\
&\leq \Big(\int_{\mathbb{R}^{N}}\int_{\mathbb{R}^{N}}\left|  R^{m} (f-f_{n})(x,y)\right|  ^{p}
\|x-y\|^{-mp}_{K}\rho_{\varepsilon}(\|x-y\|_{K})\ dydx\Big)^{\frac{1}{p}}\notag\\
&\lesssim \Big(\int_{\mathbb{R}^{N}}\left(  \int_{\mathbb{S}^{N-1}}
|D^{m}(f_{n}-f)(x)(\sigma,\cdots, \sigma)|^{p} d\sigma\right) dx\Big)^{\frac{1}{p}}.\notag
\end{align}
Where we used Lemma \ref{Sts} in the last inequality. Thus 
\begin{align}
&\Big(\int_{\mathbb{R}^{N}}\int_{\mathbb{R}^{N}}\left|  R^{m} f(x,y)\right|  ^{p}
\|x-y\|^{-mp}_{K}\rho_{\varepsilon}(\|x-y\|_{K})\ dydx\Big)^{\frac{1}{p}}=\notag\\
&\Big(\int_{\mathbb{R}^{N}}\int_{\mathbb{R}^{N}}\left|  R^{m} f_{n}(x,y)\right|  ^{p}
\|x-y\|^{-mp}_{K}\rho_{\varepsilon}(\|x-y\|_{K})\ dydx\Big)^{\frac{1}{p}}+o(1)\notag
\end{align}
here $o(1)\to 0$ as $n\to \infty$ uniformly on $\varepsilon$. So fix $\epsilon>0$, then there exists $n_{0}$ big enough so that for $n\geq n_{0}$, we have $\|f-f_{n}\|_{W^{m,p}}<\epsilon$ and
\begin{align}
&\Big|\Big(\int_{\mathbb{R}^{N}}\int_{\mathbb{R}^{N}}\left|  R^{m} f(x,y)\right|  ^{p}
\|x-y\|^{-mp}_{K}\rho_{\varepsilon}(\|x-y\|_{K})\ dydx\Big)^{\frac{1}{p}}\notag\\
&-\Big(\int_{\mathbb{R}^{N}}\int_{\mathbb{R}^{N}}\left|  R^{m} f_{n}(x,y)\right|  ^{p}
\|x-y\|^{-mp}_{K}\rho_{\varepsilon}(\|x-y\|_{K})\ dydx\Big)^{\frac{1}{p}}\Big|<\epsilon.\notag
\end{align}
Then we have 
\begin{align}
&\lim_{\varepsilon\to 0}\Big|\Big(\int_{\mathbb{R}^{N}}\int_{\mathbb{R}^{N}}\left|  R^{m} f(x,y)\right|  ^{p}
\|x-y\|^{-mp}_{K}\rho_{\varepsilon}(\|x-y\|_{K})\ dydx\Big)^{\frac{1}{p}}\notag\\
&-\Big(\frac{N+mp}{m^{mp}}\int_{\mathbb{R}^{N}}\int_{K}|D^{m} f(x)(y,\cdots,y)|^{p}
dy dx)^{\frac{1}{p}}\Big|\notag\\
&\leq \lim_{\varepsilon\to 0}\Big|\Big(\int_{\mathbb{R}^{N}}\int_{\mathbb{R}^{N}}\left|  R^{m} f(x,y)\right|  ^{p}
\|x-y\|^{-mp}_{K}\rho_{\varepsilon}(\|x-y\|_{K})\ dydx\Big)^{\frac{1}{p}}\notag\\
&-\Big(\int_{\mathbb{R}^{N}}\int_{\mathbb{R}^{N}}\left|  R^{m} f_{n}(x,y)\right|  ^{p}
\|x-y\|^{-mp}_{K}\rho_{\varepsilon}(\|x-y\|_{K})\ dydx\Big)^{\frac{1}{p}}\Big|\notag\\
&+\lim_{\varepsilon\to 0}\Big|\Big(\int_{\mathbb{R}^{N}}\int_{\mathbb{R}^{N}}\left|  R^{m} f_{n}(x,y)\right|  ^{p}
\|x-y\|^{-mp}_{K}\rho_{\varepsilon}(\|x-y\|_{K})\ dydx\Big)^{\frac{1}{p}}\notag\\
&-\Big(\frac{N+mp}{m^{mp}}\int_{\mathbb{R}^{N}}\int_{K}|D^{m} f_{n}(x)(y,\cdots,y)|^{p}
dy dx\Big)^{\frac{1}{p}}\Big|\notag\\
&+\Big|\Big(\frac{N+mp}{m^{mp}}\int_{\mathbb{R}^{N}}\int_{K}|D^{m} f_{n}(x)(y,\cdots,y)|^{p}
dy dx\Big)^{\frac{1}{p}}\\
&-\Big(\frac{N+mp}{m^{mp}}\int_{\mathbb{R}^{N}}\int_{K}|D^{m} f(x)(y,\cdots,y)|^{p}
dy dx\Big)^{\frac{1}{p}}\Big|\notag\\
&\lesssim  \lim_{\varepsilon\to 0}\Big|\Big(\int_{\mathbb{R}^{N}}\int_{\mathbb{R}^{N}}\left|  R^{m} f_{n}(x,y)\right|  ^{p}
\|x-y\|^{-mp}_{K}\rho_{\varepsilon}(\|x-y\|_{K})\ dydx\Big)^{\frac{1}{p}}\notag\\
&-\Big(\frac{N+mp}{m^{mp}}\int_{\mathbb{R}^{N}}\int_{K}|D^{m} f_{n}(x)(y,\cdots,y)|^{p}
dy dx\Big)^{\frac{1}{p}}\Big|+2\epsilon \notag
\end{align}
So the conclusion follows from Theorem \ref{Bani1}.

\end{proof}

\section{Characterizations of the higher order Sobolev spaces via the Taylor
remainder}

We recall that
\[
T_{y}^{m-1}f\left(  x\right)  =%
{\displaystyle\sum\limits_{\left\vert \alpha\right\vert \leq m-1}}
D^{\alpha}f\left(  y\right)  \frac{\left(  x-y\right)  ^{\alpha}}{\alpha!}%
\]
and
\[
R_{m-1}f\left(  x,y\right)  =f\left(  x\right)  -T_{y}^{m-1}f\left(  x\right)  .
\]
Proceeding as in \cite{Cui2} and by an easy induction we get
\begin{equation}\label{stimaT}
R_{m-1} f(x,x+he_N)= h^m \int_{[0,1]^m}\partial^m_{x_N} f(x', x_N+\prod_{i=1}^m t_i h) \prod_{i=1}^m t_i^{m-i}\, dt_1\ldots dt_m
\end{equation}
thus
\begin{equation}\label{killall}
|R_{m-1} f(x,x+he_N)|\leq \frac{h^m}{m!}\mathbb{M}_N(\partial^m_{x_N}f)(x).
\end{equation}
\subsection{Proof of Theorem \ref{lam1}}
Using \eqref{killall} the proof of the following Lemma is very similar to the one of Lemma \ref{pols} so we omit it. 
\begin{lemma}Let $K\subset\mathbb{R}^N$ be a convex, symmetric set containing the origin with nonempty interior.
Let $f\in W^{m,p}(\mathbb{R}^{N})$, $1<p<\infty$ and $m\in\mathbb{N}$. Then there exists a constant
$C=C(m,N,p)>0$ such that

\[
\underset{\left\vert R_{m-1}f\left(  x,y\right)  \right\vert >\delta}{%
{\displaystyle\int\limits_{\mathbb{R} ^{N}}}
{\displaystyle\int\limits_{\mathbb{R} ^{N}}}
}\frac{\delta^{p}}{\left\Vert x-y\right\Vert _{K}^{N+mp}}dxdy\leq C%
{\displaystyle\int\limits_{\mathbb{R} ^{N}}}
|\nabla^{m}f\left(  x\right)  |^{p}dx,\forall\delta>0.
\]

\end{lemma}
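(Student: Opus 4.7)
The plan is to follow the structure of the proof of Lemma \ref{pols} verbatim, replacing the pointwise bound on $\Delta^m_h f$ coming from \eqref{meanva1} by the pointwise bound on $R_{m-1}f$ coming from \eqref{killall}. First, I would switch to polar coordinates via $y=x+t\sigma$ with $\sigma\in\mathbb{S}^{N-1}$ and $t=|x-y|$, so that
\[
\underset{\left\vert R_{m-1}f(x,y)\right\vert >\delta}{\int_{\mathbb{R}^N}\int_{\mathbb{R}^N}}\frac{\delta^p}{\|x-y\|_K^{N+mp}}\,dxdy=\int_{\mathbb{S}^{N-1}}\underset{|R_{m-1}f(x,x+t\sigma)|>\delta}{\int_{\mathbb{R}^N}\int_{\mathbb{R}^+}}\frac{\delta^p}{\|\sigma\|_K^{N+mp}\,t^{1+mp}}\,dtdxd\sigma.
\]
Since $A\le\|\sigma\|_K\le B$ by \eqref{*}, the $\|\sigma\|_K^{N+mp}$ factor is harmless, and it suffices to produce, uniformly in $\sigma\in\mathbb{S}^{N-1}$, the estimate
\[
\underset{|R_{m-1}f(x,x+t\sigma)|>\delta}{\int_{\mathbb{R}^N}\int_{\mathbb{R}^+}}\frac{\delta^p}{t^{1+mp}}\,dtdx\le C\int_{\mathbb{R}^N}|\nabla^m f(x)|^p\,dx.
\]

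Next, by rotational invariance (performing the same change of variables using $B\in SO(N)$ with $B\sigma=e_N$, exactly as in the footnote of Lemma \ref{pols}), it is enough to treat the case $\sigma=e_N$. At this point I would invoke the pointwise bound \eqref{killall}, namely
\[
|R_{m-1}f(x,x+te_N)|\lesssim t^m\,\mathbb{M}_N(\partial^m_{x_N}f)(x),
\]
so that $\{|R_{m-1}f(x,x+te_N)|>\delta\}\subset\{Ct^m\,\mathbb{M}_N(\partial^m_{x_N}f)(x)>\delta\}$ for a suitable constant $C=C(m)$.

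The remaining computation is purely Fubini plus a one-dimensional integration in $t$: for each fixed $x$,
\[
\int_0^{\infty}\mathbf{1}_{\{Ct^m\mathbb{M}_N(\partial^m_{x_N}f)(x)>\delta\}}\,\frac{\delta^p}{t^{1+mp}}\,dt\lesssim \mathbb{M}_N(\partial^m_{x_N}f)(x)^p,
\]
after which the $L^p$-boundedness of the directional maximal operator $\mathbb{M}_N$ (applied slicewise in $x_N$ and integrated in $x'$, see \cite{Stein}) yields
\[
\int_{\mathbb{R}^N}\mathbb{M}_N(\partial^m_{x_N}f)(x)^p\,dx\lesssim \int_{\mathbb{R}^N}|\partial^m_{x_N}f(x)|^p\,dx\lesssim\|\nabla^m f\|_{L^p(\mathbb{R}^N)}^p.
\]

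I do not anticipate a genuine obstacle: the only subtle point is confirming that \eqref{killall}, stated in the paper for the direction $e_N$, gives after rotation a bound of the same form $|R_{m-1}f(x,x+t\sigma)|\lesssim t^m\,\mathbb{M}_\sigma(\partial_\sigma^m f)(x)$ along every unit direction $\sigma$, and that the corresponding directional maximal operator is $L^p$-bounded uniformly in $\sigma$. Both facts follow from \eqref{stimaT} after a rotation, so the argument is a routine transcription of the proof of Lemma \ref{pols}, with the combinatorial factor in the pointwise estimate being the only numerical difference.
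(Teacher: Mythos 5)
Your proposal is correct and is precisely the argument the paper intends: the authors omit the proof, stating it is "very similar to the one of Lemma \ref{pols}" with \eqref{killall} replacing \eqref{meanva1}, and indeed the same chain (polar coordinates, rotation to $e_N$, the pointwise maximal-function bound, the elementary $t$-integral giving the factor $\tfrac{(m!)^p}{mp}$, and the $L^p$-boundedness of the directional maximal operator) reappears verbatim inside the paper's proof of Theorem \ref{lam1} when establishing \eqref{3.3}--\eqref{3.41}. No gaps.
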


We are now ready to prove Theorem \ref{lam1}.\\




\begin{proof}[\textbf{Proof of Theorem \ref{lam1}:}] Using a change of variables (writing $y=x+\sqrt[m]{\delta}h\sigma$,
$\sigma=\frac{y-x}{\left\vert y-x\right\vert }$, $h=\frac{1}{\sqrt[m]{\delta}%
}\left\vert y-x\right\vert $), we obtain
\[
\underset{\left\vert R_{m-1}f\left(  x,y\right)  \right\vert >\delta}{%
{\displaystyle\int\limits_{\mathbb{R}^{N}}}
{\displaystyle\int\limits_{\mathbb{R}^{N}}}
}\frac{\delta^{p}}{\left\Vert x-y\right\Vert _{K}^{N+mp}}dxdy=\underset
{|\frac{R_{m-1}f\left(  x,x+\sqrt[m]{\delta}h\sigma\right)  }{h^{m}\delta
}|h^{m}>1}{\int_{\mathbb{S}^{N-1}}\int_{\mathbb{R}^{N}}\int_{0}^{\infty}}%
\frac{1}{\left\Vert \sigma\right\Vert _{K}^{N+mp}}\frac{1}{h^{mp+1}%
}dhdxd\sigma.
\]
We define the auxiliary function $F_{\delta}:\mathbb{S}^{N-1}\rightarrow\mathbb{R}$ by
\[
F_{\delta}(\sigma):=\frac{1}{\left\Vert \sigma\right\Vert _{K}^{N+mp}}%
\underset{|\frac{R_{m-1}f\left(  x,x+\sqrt[m]{\delta}h\sigma\right)  }%
{h^{m}\delta}|h^{m}>1}{\int_{\mathbb{R}^{N}}\int_{0}^{\infty}}\frac
{1}{h^{mp+1}}dhdx.
\]
We first prove that for all $\sigma\in\mathbb{S}^{N-1}$, $\forall\delta>0$
\begin{equation}
F_{\delta}(\sigma)\leq\frac{1}{A^{N+mp}}C(m,N,p)%
{\displaystyle\int\limits_{\mathbb{R}^{N}}}
|\nabla^{m}f\left(  x\right)  |^{p}dx. \label{3.3}%
\end{equation}
that is
\[
\underset{\left\vert \frac{R_{m-1}f\left(  x,x+\sqrt[m]{\delta}h\sigma\right)
}{h^{m}\delta}\right\vert h^{m}>1}{\int_{\mathbb{R}^{N}}\int_{0}^{\infty}%
}\frac{1}{h^{mp+1}}dhdx\leq C(m,N,p)%
{\displaystyle\int\limits_{\mathbb{R}^{N}}}
|\nabla^{m}f\left(  x\right)  |^{p}dx.
\]
Without loss of generality, we assume that $\sigma=e_{N}=(0,...,0,1)$.
Hence, we need to verify that
\begin{equation}
\underset{\left\vert \frac{R_{m-1}f\left(  x,x+\sqrt[m]{\delta}he_{N}\right)
}{h^{m}\delta}\right\vert h^{m}>1}{\int_{\mathbb{R}^{N}}\int_{0}^{\infty}%
}\frac{1}{h^{mp+1}}dhdx\leq C(m,N,p)%
{\displaystyle\int\limits_{\mathbb{R}^{N}}}
|\nabla^{m}f\left(  x\right)  |^{p}dx. \label{3.41}%
\end{equation}
By \eqref{killall} one has
\[
\left\vert \frac{R_{m-1}f\left(  x,x+\sqrt[m]{\delta}he_{N}\right)  }%
{h^{m}\delta}\right\vert \leq\frac{1}{m!}\mathbb{M}_{N}\left(  \partial_{x_{N}}^{m}f\right)
 \left(
x\right)
\]
and therefore
\begin{align*}
\underset{\left\vert \frac{R_{m-1}f\left(  x,x+\sqrt[m]{\delta}he_{N}\right)
}{h^{m}\delta}\right\vert h^{m}>1}{\int_{\mathbb{R}^{N}}\int_{0}^{\infty}%
}\frac{1}{h^{mp+1}}dhdx  &  \leq\underset{h^{m}\mathbb{M}_{N}\left(  \partial_{x_{N}}^{m}f\right)
 \left(x\right)>m!}{\int_{\mathbb{R}^{N}}\int_{0}^{\infty}}\frac
{1}{h^{mp+1}}dhdx\\
&  \leq\frac{\left(  m!\right)  ^{p}}{mp}%
{\displaystyle\int\limits_{\mathbb{R}^{N}}}
\left\vert \mathbb{M}_{N}\left(  \partial_{x_{N}}^{m}f\right) \left(x\right)\right\vert
^{p}dx\\
&  \leq C(m,N,p)%
{\displaystyle\int\limits_{\mathbb{R}^{N}}}
|\nabla^{m}f\left(  x\right)  |^{p}dx.
\end{align*}
Next we show that
\begin{equation}
F_{\delta}(\sigma)\rightarrow\frac{1}{\left(  m!\right)  ^{p}mp}\frac
{1}{\left\Vert \sigma\right\Vert _{K}^{N+mp}}%
{\displaystyle\int\limits_{\mathbb{R}^{N}}}
|D^{m}f(x)(\sigma,...,\sigma)|^{p}dx\text{ as }\delta\rightarrow0\text{ for
every }\sigma\in\mathbb{S}^{N-1}. \label{3.5}%
\end{equation}
It is enough to show
\[
\int_{\mathbb{R}^{N}}\int_{0}^{\infty}G_{\delta}(x,h)dhdx\rightarrow\frac
{1}{\left(  m!\right)  ^{p}mp}%
{\displaystyle\int\limits_{\mathbb{R}^{N}}}
|D^{m}f(x)(\sigma,...,\sigma)|^{p}dx\text{ as }\delta\rightarrow0,
\]
where
\[
G_{\delta}(x,h):=\frac{1}{h^{mp+1}}1_{\left\{  |\frac{R_{m-1}f\left(
x,x+\sqrt[m]{\delta}h\sigma\right)  }{h^{m}\delta}|h^{m}>1\right\}  }(x,h).
\]
With loss of generality, we suppose that $\sigma=e_{N}=(0,...,0,1)$. Noting
that for all $\sigma\in\mathbb{S}^{N-1}$: $G_{\delta}(x,h)\rightarrow\frac
{1}{h^{2p+1}}1_{\{|D^{m}f(x)(\sigma,...,\sigma)|h^{m}>m!\}}$ as
$\delta\rightarrow0$ for a.e. $(x,h)\in\mathbb{R}^{N}\times\lbrack0,\infty)$,
and
\[
G_{\delta}(x,h)\leq\frac{1}{h^{mp+1}}1_{\{h^{m}\mathbb{M}_{N}\left(  \partial_{x_{N}}^{m}f\right)\left(x\right)>m!\}}(x,h)\in\mathit{L}^{1}(\mathbb{R}^{N}%
\times\lbrack0,\infty)).
\]
Hence, by the Lebesgue dominated convergence theorem, we get (\ref{3.5}). 
Once again, by the Lebesgue dominated convergence theorem, we conclude that
\begin{align*}
&\underset{\delta\rightarrow0}{\lim}\underset{\left\vert R_{m-1}f\left(
x,y\right)  \right\vert >\delta}{\int_{\mathbb{R}^{N}}\int_{\mathbb{R}^{N}}%
}\frac{\delta^{p}}{\left\Vert x-y\right\Vert _{K}^{N+mp}}dxdy\\  &
=\int_{\mathbb{S}^{N-1}}\frac{1}{\left(  m!\right)  ^{p}mp}\frac{1}{\left\Vert
\sigma\right\Vert _{K}^{N+mp}}\int_{\mathbb{R}^{N}}\left\vert D^{m}%
f(x)(\sigma,...,\sigma)\right\vert ^{p}dxd\sigma\\
&  =\frac{1}{\left(  m!\right)  ^{p}mp}\int_{\mathbb{R}^{N}}\int
_{\mathbb{S}^{N-1}}\frac{1}{\left\Vert \sigma\right\Vert _{K}^{N+mp}}\left\vert
D^{m}f(x)(\sigma,...,\sigma)\right\vert ^{p}d\sigma dx\\
&=\frac{N+mp}{\left(
m!\right)  ^{p}mp}\int_{\mathbb{R}^{N}}\int_{K}\left\vert
D^{m}f(x)(y,...,y)\right\vert ^{p}dydx
\end{align*}

\end{proof}

\subsection{Proof of Theorem \ref{lam2}}
 The mollifiers $\rho_{\varepsilon}\in L_{loc}^{1}\left(  0,\infty\right) $ are nonnegative functions satisfying, 

$${\displaystyle\int\limits_{0}^{\infty}}
\rho_{\varepsilon}\left(  r\right)  r^{N-1}dr=1 \quad \text{and}\quad \lim_{\varepsilon
\rightarrow0}
{\displaystyle\int\limits_{\delta}^{\infty}}
\rho_{\varepsilon}\left(  r\right)  r^{N-1}dr=0 \text{ for all } \delta>0.$$
\begin{lemma}
Let $K\subset\mathbb{R}^N$ be a convex, symmetric set containing the origin with nonempty interior.
Let $f\in W^{m,p}(\mathbb{R}^{N})$, $1<p<\infty$ and $m\in\mathbb{N}$. Then there exists a constant
$C=C(m,N,p)>0$ such that for all $\varepsilon>0$ the following inequality holds
\[
\int_{\mathbb{R}^{N}}\int_{\mathbb{R}^{N}}\frac{\left\vert R_{m-1}f\left(
x,y\right)  \right\vert ^{p}}{\left\Vert x-y\right\Vert _{K}^{mp}}%
\rho_{\varepsilon}\left(  \left\Vert x-y\right\Vert _{K}\right)  dxdy\leq
C\int_{\mathbb{R}^{N}}|\nabla^{m}f\left(  x\right)  |^{p}dx.
\]

\end{lemma}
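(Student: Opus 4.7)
The strategy mirrors that of Lemma \ref{pols}, with the extra mollifier $\rho_{\varepsilon}$ handled by the normalization condition $\int_0^{\infty}\rho_{\varepsilon}(r) r^{N-1}\,dr=1$. The plan is to pass to polar coordinates, reduce to the direction $\sigma=e_N$ by a rotation, apply the Taylor-remainder pointwise bound \eqref{killall}, integrate out the radial variable using the $\rho_\varepsilon$-normalization, and finally close with the $L^p$-boundedness of the one-dimensional Hardy--Littlewood maximal operator.

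First I would set $y=x+t\sigma$ with $\sigma\in\mathbb{S}^{N-1}$ and $t=|x-y|$ to obtain
\[
\int_{\mathbb{R}^N}\int_{\mathbb{R}^N}\frac{|R_{m-1}f(x,y)|^p}{\|x-y\|_K^{mp}}\rho_{\varepsilon}(\|x-y\|_K)\,dxdy
=\int_{\mathbb{S}^{N-1}}\int_{\mathbb{R}^N}\int_{0}^{\infty}\frac{|R_{m-1}f(x,x+t\sigma)|^p}{\|\sigma\|_K^{mp}\, t^{mp}}\,\rho_{\varepsilon}(t\|\sigma\|_K)\,t^{N-1}\,dt\,dx\,d\sigma.
\]
Since $A\le\|\sigma\|_K\le B$ by \eqref{*}, the factor $\|\sigma\|_K^{-mp}$ is harmless. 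Next, for fixed $\sigma\in\mathbb{S}^{N-1}$, pick $B_\sigma\in SO(N)$ with $B_\sigma\sigma=e_N$ and change variable $x\mapsto B_\sigma^{-1}x$ in the inner integral; this reduces matters to estimating
\[
I_\sigma(\varepsilon):=\int_{\mathbb{R}^N}\int_{0}^{\infty}\frac{|R_{m-1}(f\circ B_\sigma^{-1})(x,x+te_N)|^p}{t^{mp}}\,\rho_{\varepsilon}(t\|\sigma\|_K)\,t^{N-1}\,dt\,dx,
\]
with a uniform constant depending only on $A,B,m,N,p$.

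Then I would invoke \eqref{killall} applied to $f\circ B_\sigma^{-1}$ in the direction $e_N$ to obtain
\[
|R_{m-1}(f\circ B_\sigma^{-1})(x,x+te_N)|\le \frac{t^m}{m!}\,\mathbb{M}_N\bigl(\partial_{x_N}^m(f\circ B_\sigma^{-1})\bigr)(x),
\]
so that the $t$-variable decouples. Performing the change of variable $s=t\|\sigma\|_K$ and using \eqref{propq} gives
\[
\int_0^{\infty}\rho_{\varepsilon}(t\|\sigma\|_K)\,t^{N-1}\,dt=\frac{1}{\|\sigma\|_K^N}\int_0^\infty\rho_{\varepsilon}(s)\,s^{N-1}\,ds=\frac{1}{\|\sigma\|_K^N},
\]
independently of $\varepsilon$. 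Combining these two displays one gets
\[
I_\sigma(\varepsilon)\le \frac{1}{(m!)^p\|\sigma\|_K^N}\int_{\mathbb{R}^N}\bigl|\mathbb{M}_N\bigl(\partial_{x_N}^m(f\circ B_\sigma^{-1})\bigr)(x)\bigr|^p\,dx.
\]
Finally, the one-dimensional maximal theorem applied slicewise (as in the proof of Lemma \ref{pols}) yields $\|\mathbb{M}_N(\partial_{x_N}^m g)\|_{L^p(\mathbb{R}^N)}^p\lesssim \|\partial_{x_N}^m g\|_{L^p(\mathbb{R}^N)}^p\le \|\nabla^m g\|_{L^p(\mathbb{R}^N)}^p$, which is rotation-invariant, so the $\sigma$-integration produces only the harmless factor $|\mathbb{S}^{N-1}|\cdot A^{-N-mp}$.

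The key point is that the mollifier $\rho_\varepsilon$ contributes exactly the unit mass $\int_0^\infty \rho_\varepsilon(s)s^{N-1}\,ds=1$ after the radial change of variable, which makes the bound uniform in $\varepsilon$. The only mild subtlety is the rotation step: one needs the maximal-function estimate in an arbitrary direction $\sigma$, which is why we conjugate $f$ by $B_\sigma\in SO(N)$; since the $L^p$-norm of $\nabla^m f$ is rotation-invariant, the final constant does not depend on $\sigma$. Apart from this, every step is a direct transcription of the argument used for Lemma \ref{pols}, so no new obstacle arises.
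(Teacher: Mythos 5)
Your proof is correct, but it follows a different route from the paper's. The paper first reduces to $f\in C_c^\infty(\mathbb{R}^N)$ by density, writes $y=x+h$ and applies Fubini to factor the integral as $\int_{\mathbb{R}^N}\frac{\rho_\varepsilon(\|h\|_K)}{\|h\|_K^{mp}}\bigl(\int_{\mathbb{R}^N}|R_{m-1}f(x+h,x)|^p\,dx\bigr)dh$, and then invokes the external estimate (2.17) of \cite{BIK2011}, namely $\int_{\mathbb{R}^N}|R_{m-1}f(x+h,x)|^p\,dx\le C|h|^{mp}\|\nabla^m f\|_{L^p}^p$, before integrating out the mollifier via its unit mass. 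You instead work pointwise: polar coordinates, rotation to the direction $e_N$, the directional bound \eqref{killall} in terms of $\mathbb{M}_N(\partial_{x_N}^m f)$, the radial normalization from \eqref{propq}, and the $L^p$-boundedness of the one-dimensional maximal operator (which is where $p>1$ enters). Both arguments hinge on the same mechanism (a bound of order $|h|^{mp}$ on the Taylor remainder, followed by integrating $\rho_\varepsilon$ against $r^{N-1}$), but yours is self-contained within the paper's toolkit, avoids both the citation to \cite{BIK2011} and the density reduction, and runs in exact parallel to Lemma \ref{pols}; the paper's version is shorter at the cost of outsourcing the key inequality. Your treatment of the rotation step is sound: the Taylor remainder transforms as $R_{m-1}(f\circ B_\sigma^{-1})(B_\sigma x,B_\sigma y)=R_{m-1}f(x,y)$ and $\|\nabla^m(f\circ B_\sigma^{-1})\|_{L^p}\lesssim\|\nabla^m f\|_{L^p}$ uniformly in $\sigma$, so the constant is indeed independent of the direction.
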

\begin{proof}
By density we can assume that $f\in C_{c}^{\infty}\left(
\mathbb{R}
^{N}\right)  $. We have%
\begin{align*}
\int_{\mathbb{R}^{N}}\int_{\mathbb{R}^{N}}\frac{\left\vert R_{m-1}f\left(
x,y\right)  \right\vert ^{p}}{\left\Vert x-y\right\Vert _{K}^{mp}}%
\rho_{\varepsilon}\left(  \left\Vert x-y\right\Vert _{K}\right)  dxdy  &
=\int_{\mathbb{R}^{N}}\int_{\mathbb{R}^{N}}\frac{\left\vert R_{m-1}f\left(
x+h,x\right)  \right\vert ^{p}}{\left\Vert h\right\Vert _{K}^{mp}}%
\rho_{\varepsilon}\left(  \left\Vert h\right\Vert _{K}\right)  dxdh\\
&  =\int_{\mathbb{R}^{N}}\frac{\rho_{\varepsilon}\left(  \left\Vert
h\right\Vert _{K}\right)  }{\left\Vert h\right\Vert _{K}^{mp}}\int
_{\mathbb{R}^{N}}\left\vert R_{m-1}f\left(  x+h,x\right)  \right\vert ^{p}dxdh.
\end{align*}
Since by \cite[(2.17)]{BIK2011} there exists $C=C(m,N,p)>0$ such that
\begin{align*}
\int_{\mathbb{R}^{N}}\left\vert R_{m-1}f\left(  x+h,x\right)  \right\vert
^{p}dx\leq C\left\vert h\right\vert ^{mp}\int_{\mathbb{R}^{N}}%
|\nabla^{m}f\left(  x\right)  |^{p}dx,
\end{align*}
we have%
\[
\int_{\mathbb{R}^{N}}\int_{\mathbb{R}^{N}}\frac{\left\vert R_{m-1}f\left(
x,y\right)  \right\vert ^{p}}{\left\Vert x-y\right\Vert _{K}^{mp}}%
\rho_{\varepsilon}\left(  \left\Vert x-y\right\Vert _{K}\right)  dxdy\leq
\frac{1}{A^{mp}}C\int_{\mathbb{R}^{N}}\rho_{\varepsilon}\left(
\left\Vert h\right\Vert _{K}\right)  dh\int_{\mathbb{R}^{N}}|\nabla
^{m}f\left(  x\right)  |^{p}dx.
\]
On the other hand%
\begin{align*}
\int_{\mathbb{R}^{N}}\rho_{\varepsilon}\left(  \left\Vert h\right\Vert
_{K}\right)  dh  &  =\int_{\mathbb{S}^{N-1}}\int_{0}^{\infty}\rho_{\varepsilon
}\left(  r\left\Vert \sigma\right\Vert _{K}\right)  r^{N-1}drd\sigma\\
&  =\int_{\mathbb{S}^{N-1}}\int_{0}^{\infty}\rho_{\varepsilon}\left(  s\right)
\left(  \frac{s}{\left\Vert \sigma\right\Vert _{K}}\right)  ^{N-1}\frac
{1}{\left\Vert \sigma\right\Vert _{K}}dsd\sigma\\
&  \leq\frac{1}{A^{mp}},
\end{align*}
and the conclusion follows.
\end{proof}




\begin{proof}[\textbf{Proof of Theorem \ref{lam2}:}]

First, we assume that $f\in C_{c}^{\infty}\left(
\mathbb{R}
^{N}\right)  .$ We have
\[
\int_{\mathbb{R}^{N}}\int_{\mathbb{R}^{N}}\frac{\left\vert R_{m-1}f\left(
x,y\right)  \right\vert ^{p}}{\left\Vert x-y\right\Vert _{K}^{mp}}%
\rho_{\varepsilon}\left(  \left\Vert x-y\right\Vert _{K}\right)
dxdy=\int_{\mathbb{R}^{N}}\int_{\mathbb{R}^{N}}\frac{\left\vert R_{m-1}%
f\left(  x,x+h\right)  \right\vert ^{p}}{\left\Vert h\right\Vert _{K}^{mp}%
}\rho_{\varepsilon}\left(  \left\Vert h\right\Vert _{K}\right)  dxdh
\]
By Taylor's formula, we have that for every $\delta>0$, there exists
$C_{\delta}>0$ such that
\[
\left\vert R_{m-1}f\left(  x+h,x\right)  \right\vert ^{p}\leq\left(
1+\delta\right)  \frac{1}{\left(  m!\right)  ^{p}}\left\vert D^{m}%
f(x)(h,...,h)\right\vert ^{p}+C_{\delta}\left\vert h\right\vert ^{\left(
m+1\right)  p}.
\]
Hence%
\begin{align*}
& \underset{\left\vert h \right\vert \leq 1}{ \int_{\mathbb{R}^{N}}\int_{\mathbb{R}^{N}}}\frac{\left\vert R_{m-1}f\left(
x+h,x\right)  \right\vert ^{p}}{\left\Vert h\right\Vert _{K}^{mp}}%
\rho_{\varepsilon}\left(  \left\Vert h\right\Vert _{K}\right)  dxdh\\
&  \leq\frac{1+\delta}{\left(  m!\right)  ^{p}}
\int_{\mathbb{R}^{N}}\int_{\mathbb{R}^{N}}\left\vert D^{m}%
f(x)(h,...,h)\right\vert ^{p}\frac{\rho_{\varepsilon}\left(  \left\Vert
h\right\Vert _{K}\right)  }{\left\Vert h\right\Vert _{K}^{mp}}dxdh\\
&  +C_{\delta}\underset{\left\vert h \right\vert \leq 1}{\int_{\mathbb{R}^{N}}}\left\vert h\right\vert ^{\left(
m+1\right)  p}\frac{\rho_{\varepsilon}\left(  \left\Vert h\right\Vert
_{K}\right)  }{\left\Vert h\right\Vert _{K}^{mp}}%
{\displaystyle\int\limits_{\left\{  x\text{ or }x+h\in\text{supp}\left(
f\right)  \right\}  }}
dxdh\\
&  \leq\frac{1+\delta}{\left(  m!\right)  ^{p}} \int
_{\mathbb{R}^{N}}\int_{\mathbb{R}^{N}}\left\vert D^{m}f(x)(\frac{h}{\left\vert
h\right\vert },...,\frac{h}{\left\vert h\right\vert })\right\vert
^{p}\left\vert h\right\vert ^{mp}\frac{\rho_{\varepsilon}\left(  \left\Vert
h\right\Vert _{K}\right)  }{\left\Vert h\right\Vert _{K}^{mp}}dxdh\\
&  +2C_{\delta}\left\vert \text{supp}\left(  f\right)  \right\vert
\underset{\left\vert h \right\vert \leq 1}{\int_{\mathbb{R}^{N}}}\left\vert h\right\vert ^{\left(  m+1\right)  p}%
\frac{\rho_{\varepsilon}\left(  \left\Vert h\right\Vert _{K}\right)
}{\left\Vert h\right\Vert _{K}^{mp}}dh.
\end{align*}
By \eqref{*} and using \eqref{propq} it is easy to see that
\begin{align*}
\lim_{\varepsilon\to 0}\underset{\left\vert h \right\vert \leq 1}{\int_{\mathbb{R}^{N}}}\left\vert h\right\vert ^{\left(  m+1\right)  p}%
\frac{\rho_{\varepsilon}\left(  \left\Vert h\right\Vert _{K}\right)
}{\left\Vert h\right\Vert _{K}^{mp}}dh  =0.
\end{align*}
On the other hand,%
\begin{align*}
&  \frac{1}{\left(  m!\right)  ^{p}}\int_{\mathbb{R}^{N}}\int_{\mathbb{R}^{N}%
}\left\vert D^{m}f(x)(\frac{h}{\left\vert h\right\vert },...,\frac
{h}{\left\vert h\right\vert })\right\vert ^{p}\left\vert h\right\vert
^{mp}\frac{\rho_{\varepsilon}\left(  \left\Vert h\right\Vert _{K}\right)
}{\left\Vert h\right\Vert _{K}^{mp}}dxdh\\
&  =\frac{N+mp}{\left(  m!\right)  ^{p}}\int_{\mathbb{R}^{N}}%
\int_{K}\left\vert D^{m}f(x)(y,...,y)\right\vert ^{p}dydx.
\end{align*}
Also, since $f\in C_{c}^{\infty}\left(
\mathbb{R}
^{N}\right)$, we have
\begin{align*}
 \underset{\left\vert h \right\vert \geq 1}{ \int_{\mathbb{R}^{N}}\int_{\mathbb{R}^{N}}}\frac{\left\vert R_{m-1}f\left(
x+h,x\right)  \right\vert ^{p}}{\left\Vert h\right\Vert _{K}^{mp}}%
\rho_{\varepsilon}\left(  \left\Vert h\right\Vert _{K}\right)  dxdh
& \lesssim \underset{\left\vert h \right\vert \geq 1}
{\int_{\mathbb{R}^{N}}}\rho_{\varepsilon}\left(  \left\Vert
h\right\Vert _{K}\right)  dh\\
&  \lesssim\int_{1}^{\infty}\rho_{\varepsilon}\left(  s\right)
s^{N-1}ds\rightarrow0\text{ as }\varepsilon\rightarrow0.
\end{align*}
Letting $\varepsilon\rightarrow0$ and $\delta\rightarrow0$ we conclude,
\[
\limsup_{\varepsilon\rightarrow0}\int_{\mathbb{R}^{N}}\int_{\mathbb{R}^{N}}%
\frac{\left\vert R_{m-1}f\left(  x,y\right)  \right\vert ^{p}}{\left\Vert
x-y\right\Vert _{K}^{mp}}\rho_{\varepsilon}\left(  \left\Vert x-y\right\Vert
_{K}\right)  dxdy\leq\frac{N+mp}{\left(  m!\right)  ^{p}}
\int_{\mathbb{R}^{N}}\int_{K}\left\vert D^{m}f(x)(y,...,y)\right\vert
^{p}dydx.
\]
Assume $supp(f)\subset B_R$, then by Taylor's formula and using the fact that for any $\tau>0$ there exists $C\left(  \tau\right)  >1$ such that for
all $a,b\in%
\mathbb{R}
:$
\begin{align}\label{ineqmm}
\left\vert a\right\vert ^{p}\leq\left(  1+\tau\right)  \left\vert b\right\vert
^{p}+C\left(  \tau\right)  \left\vert a-b\right\vert ^{p}.
\end{align}
we get
\[
\frac{1}{\left(  m!\right)  ^{p}}\left\vert D^{m}f(x)(h,...,h)\right\vert
^{p}\leq\left(  1+\delta\right)  \left\vert R_{m-1}f\left(  x+h,x\right)
\right\vert ^{p}+C_{\delta,B}\left\vert h\right\vert ^{\left(  m+1\right)  p}.%
\]
Hence,%
\begin{align*}
&  \frac{1}{\left(  m!\right)  ^{p}}%
{\displaystyle\int\limits_{B}}
{\displaystyle\int\limits_{\left\vert h\right\vert \leq1}}
\frac{\left\vert D^{m}f(x)(h,...,h)\right\vert ^{p}}{\left\Vert h\right\Vert
_{K}^{mp}}\rho_{\varepsilon}\left(  \left\Vert h\right\Vert _{K}\right)
dxdh\\
&  \leq\left(  1+\delta\right)  \int_{\mathbb{R}^{N}}\int_{\mathbb{R}^{N}%
}\frac{\left\vert R_{m-1}f\left(  x+h,x\right)  \right\vert ^{p}}{\left\Vert
h\right\Vert _{K}^{mp}}\rho_{\varepsilon}\left(  \left\Vert h\right\Vert
_{K}\right)  dxdh+C_{\delta,B}%
{\displaystyle\int\limits_{B}}
{\displaystyle\int\limits_{\left\vert h\right\vert \leq1}}
\left\vert h\right\vert ^{\left(  m+1\right)  p}\frac{\rho_{\varepsilon
}\left(  \left\Vert h\right\Vert _{K}\right)  }{\left\Vert h\right\Vert
_{K}^{mp}}dxdh.
\end{align*}
Proceeding as in \eqref{kill} we get
\begin{align*}
&  \frac{1}{\left(  m!\right)  ^{p}}%
{\displaystyle\int\limits_{B}}
{\displaystyle\int\limits_{\left\vert h\right\vert \leq1}}
\left\vert D^{m}f(x)(\frac{h}{\left\vert h\right\vert },...,\frac
{h}{\left\vert h\right\vert })\right\vert ^{p}\left\vert h\right\vert
^{mp}\frac{\rho_{\varepsilon}\left(  \left\Vert h\right\Vert _{K}\right)
}{\left\Vert h\right\Vert _{K}^{mp}}dxdh\\
&=\frac{1}{\left(  m!\right)  ^{p}}%
{\displaystyle\int_{B}}
\int_{\mathbb{S}^{N-1}}\left\vert D^{m}f(x)(\sigma,...,\sigma)\right\vert
^{p}\frac{1}{\left\Vert \sigma\right\Vert _{K}^{N+mp}}d\sigma dx
\end{align*}
By letting $\varepsilon\rightarrow0$, and then $\delta\rightarrow0$, we
get%
\begin{align*}
& \frac{1}{\left(  m!\right)  ^{p}}%
{\displaystyle\int_{B}}
\int_{\mathbb{S}^{N-1}}\left\vert D^{m}f(x)(\sigma,...,\sigma)\right\vert
^{p}\frac{1}{\left\Vert \sigma\right\Vert _{K}^{N+mp}}d\sigma dx\\
&\leq
\liminf_{\varepsilon\rightarrow0}\int_{\mathbb{R}^{N}}\int_{\mathbb{R}^{N}}%
\frac{\left\vert R_{m-1}f\left(  x+h,x\right)  \right\vert ^{p}}{\left\Vert
h\right\Vert _{K}^{mp}}\rho_{\varepsilon}\left(  \left\Vert h\right\Vert
_{K}\right)  dxdh,
\end{align*}
this proves the thesis if $f\in C^{\infty}_c(\mathbb{R}^N)$.

In the general case $f\in W^{m,p}(\mathbb{R}^{N})$, we fix $\tau>0$ and let $C\left(  \tau\right)  >1$ be as in \eqref{ineqmm}. By density, we can choose $g\in C_{c}^{\infty}\left(
\mathbb{R}
^{N}\right)  $ such that%
\[
\int_{\mathbb{R}^{N}}|\nabla^{m}\left(  f-g\right)  \left(  x\right)
|^{p}dx\leq\frac{\tau}{C\left(  \tau\right)  }%
\]
and
\[
\left\vert \int_{\mathbb{R}^{N}}\int_{K}\left\vert D^{m}%
g(x)(y,...,y)\right\vert ^{p}dydx-\int_{\mathbb{R}^{N}}\int_{K}\left\vert
D^{m}f(x)(y,...,y)\right\vert ^{p}dydx\right\vert \leq\tau.
\]

Then
\begin{align*}
&  \int_{\mathbb{R}^{N}}\int_{\mathbb{R}^{N}}\frac{\left\vert R_{m-1}f\left(
x+h,x\right)  \right\vert ^{p}}{\left\Vert h\right\Vert _{K}^{mp}}%
\rho_{\varepsilon}\left(  \left\Vert h\right\Vert _{K}\right)  dxdh\\
&  \leq\left(  1+\tau\right)  \int_{\mathbb{R}^{N}}\int_{\mathbb{R}^{N}}%
\frac{\left\vert R_{m-1}g\left(  x+h,x\right)  \right\vert ^{p}}{\left\Vert
h\right\Vert _{K}^{mp}}\rho_{\varepsilon}\left(  \left\Vert h\right\Vert
_{K}\right)  dxdh\\
&  +C\left(  \tau\right)  \int_{\mathbb{R}^{N}}\int_{\mathbb{R}^{N}}%
\frac{\left\vert R_{m-1}\left(  f-g\right)  \left(  x+h,x\right)  \right\vert
^{p}}{\left\Vert h\right\Vert _{K}^{mp}}\rho_{\varepsilon}\left(  \left\Vert
h\right\Vert _{K}\right)  dxdh\\
&  \leq\left(  1+\tau\right)  \int_{\mathbb{R}^{N}}\int_{\mathbb{R}^{N}}%
\frac{\left\vert R_{m-1}g\left(  x+h,x\right)  \right\vert ^{p}}{\left\Vert
h\right\Vert _{K}^{mp}}\rho_{\varepsilon}\left(  \left\Vert h\right\Vert
_{K}\right)  dxdh\\
&  +C\left(  \tau\right)  C(m,N,p)\int_{\mathbb{R}^{N}}|\nabla^{m}\left(
f-g\right)  \left(  x\right)  |^{p}dx.
\end{align*}
Letting $\varepsilon\rightarrow0$, we obtain%
\begin{align*}
&  \limsup_{\varepsilon\rightarrow0}\int_{\mathbb{R}^{N}}\int_{\mathbb{R}^{N}%
}\frac{\left\vert R_{m-1}f\left(  x+h,x\right)  \right\vert ^{p}}{\left\Vert
h\right\Vert _{K}^{mp}}\rho_{\varepsilon}\left(  \left\Vert h\right\Vert
_{K}\right)  dxdh\\
&  \leq\left(  1+\tau\right)  \frac{N+mp}{\left(  m!\right)  ^{p}}\int_{\mathbb{R}^{N}}\int_{K}\left\vert D^{m}g(x)(y,...,y)\right\vert
^{p}dydx+C(m,N,p)\tau\\
&  \leq\left(  1+\tau\right)  \frac{N+mp}{\left(  m!\right)  ^{p}}\left[  \int_{\mathbb{R}^{N}}\int_{K}\left\vert D^{m}%
f(x)(y,...,y)\right\vert ^{p}dydx+\tau\right]  +C(m,N,p)\tau.
\end{align*}
Since $\tau$ can be chosen arbitrarily, we deduce that
\begin{align*}
&  \limsup_{\varepsilon\rightarrow0}\int_{\mathbb{R}^{N}}\int_{\mathbb{R}^{N}%
}\frac{\left\vert R_{m-1}f\left(  x+h,x\right)  \right\vert ^{p}}{\left\Vert
h\right\Vert _{K}^{mp}}\rho_{\varepsilon}\left(  \left\Vert h\right\Vert
_{K}\right)  dxdh\\
&  \leq\frac{N+mp}{\left(  m!\right)  ^{p}}\int_{\mathbb{R}^{N}%
}\int_{K}\left\vert D^{m}f(x)(y,...,y)\right\vert ^{p}dydx.
\end{align*}
Also, if we switch the role of $f$ and $g$ in the above argument, then we get%
\begin{align*}
&  \frac{N+mp}{\left(  m!\right)  ^{p}}\int_{\mathbb{R}^{N}}%
\int_{K}\left\vert D^{m}f(x)(y,...,y)\right\vert ^{p}dydx\\
&  \leq\liminf_{\varepsilon\rightarrow0}\int_{\mathbb{R}^{N}}\int_{\mathbb{R}%
^{N}}\frac{\left\vert R_{m-1}f\left(  x+h,x\right)  \right\vert ^{p}%
}{\left\Vert h\right\Vert _{K}^{mp}}\rho_{\varepsilon}\left(  \left\Vert
h\right\Vert _{K}\right)  dxdh.
\end{align*}
Hence,%
\begin{align*}
&\lim_{\varepsilon\rightarrow0}\int_{\mathbb{R}^{N}}\int_{\mathbb{R}^{N}}%
\frac{\left\vert R_{m-1}f\left(  x+h,x\right)  \right\vert ^{p}}{\left\Vert
h\right\Vert _{K}^{mp}}\rho_{\varepsilon}\left(  \left\Vert h\right\Vert
_{K}\right)  dxdh\\
&=\frac{N+mp}{\left(  m!\right)  ^{p}}
\int_{\mathbb{R}^{N}}\int_{K}\left\vert D^{m}f(x)(y,...,y)\right\vert
^{p}dydx.
\end{align*}

\end{proof}

\textbf{Acknowledgements.}
The authors would like to thank Quoc-Hung Nguyen and Professor Hoai-Minh Nguyen for their interest in our work and for stimulating discussions during the preparation of the manuscript. A.P. is member of the Gruppo Nazionale per l’Analisi Matematica, la Probabilità e le loro Applicazioni (GNAMPA) of the Istituto Nazionale di Alta Matematica (INdAM).

\end{document}